\newtheorem{theorem}{Theorem}
\newtheorem{lemma}{Lemma}
\newtheorem{observation}{Observation}
\newtheorem{problem}{Problem}
\newtheorem{proposition}{Proposition}
\newtheorem{claim}{Claim}
\newtheorem{property}{Property}
\numberwithin{equation}{section}
\begin{document}

 \captionsetup[figure]{labelfont={bf},name={Fig.},labelsep=period}
	%\linenumbers 
	\title{ On the sizes  of generalized cactus graphs 
	\thanks{The work is supported by the National Natural 
	Science Foundation of China (Grant No. 
	12271157) and the Natural Science Foundation of Hunan Province, China (Grant 
	No. 2022JJ30028).}}

\author
{Licheng Zhang\thanks{Email: lczhangmath@163.com.},
Yuanqiu Huang\thanks{Corresponding author. Email: hyqq@hunnu.edu.cn.}\\
\small College of Mathematics and Statistics\\
\small Hunan Normal University, Changsha 410081, P.R. China\\
}

%	\date{\today}
\date{}
\maketitle
	
\begin{abstract}
A cactus is a connected graph in which each edge is contained in at most one 
cycle.  We generalize the concept of cactus graphs, i.e.,  a
$k$-cactus is a connected graph in which each edge is contained in at most 
$k$ cycles where $k\ge 1$.  It is well known that every 
cactus with $n$ vertices has at most $\lfloor\frac{3}{2}(n-1) \rfloor$ edges. 
Inspired by it, we attempt to 
establish analogous upper bounds for general $k$-cactus graphs. In this
paper, we first characterize $k$-cactus graphs for $2\le k\le 
4$ based on the block decompositions. Subsequently, we give  tight 
upper bounds on their sizes. Moreover, the corresponding extremal graphs are 
also characterized. However, for larger $k$, this extremal problem remains 
open. For the case of 2-connectedness, the range of $k$ is expanded to all 
positive integers in our research.  We prove that every $2$-connected $k ~(\ge 
1)$-cactus graphs with $n$ vertices has at most $n+k-1$ edges, and the bound is 
tight if $n \ge k + 2$. 
		
\vskip 0.2cm
\noindent {\bf Keywords:} $k$-cactus, size, extremal graph

\noindent {\bf MSC:} 05C35, 05C38

\end{abstract} 

\section{Introduction}

Only finite simple graphs are considered.  The {\it order} of a graph is its 
number of vertices and the {\it size} is its 
number of edges.  A {\it tree }  is a connected 
graph that contains no cycles, and a {\it cactus} (also known as 
 Husimi tree \cite{Harary}) is a connected graph in which each edge is 
 contained in at most one cycle.  Obviously, the cactus family includes trees. 
 Cactus graphs are a well-studied  graph class, 
 both in terms of 
 their structure and algorithms. For example,  the 
 family of cactus graphs is closed under graph minor operations, and they can 
 be  characterized by a single forbidden minor, the  diamond graph 
 obtained by removing an edge from the complete graph $K_4$ \cite{El-Mallah}. 
 Some  combinatorial optimization problems, 
 like facility location problems and finding the fast search number, which 
 are NP-hard for general graphs, can be solved in polynomial time 
 for  cactus graphs \cite{Ben-Moshe,Xue2023}. It is worth noting that although 
 the structure of cactus graphs is relatively simple, some problems remain 
 unsolved. One notable example is the well-known Rosa's conjecture \cite{Rosa}.
 For more properties of cactus graphs, readers may refer to \cite{Araujo, Czap, 
 Li2019, Sedlar}.

 From the perspective of enumerating the number of cycles containing
edges, cactus graphs can be extended further. We say that a connected 
graph is a {\it $k$-cactus} if each edge is contained in at most $k$ cycles
 where $k\ge 1$. In particular,  1-cactus graphs 
are cactus graphs. While the concept ``$k$-cactus" may seem novel, there are
early results on analogous cycle-edge patterns. For example, in 1973, 
Toida  \cite{Toida1973} proved that a graph $G$ is an Eulerian graph if and 
only if every edge of  $G$ lies on an odd number of cycles. In 1988, Erd\H{o}s 
\cite{Erdos} established that  $|E(G)| 
\geq \frac{3}{2}\left(|V(G)|-1\right)$ if each edge of graph $G$ lies in at 
least one triangle (i.e., $C_3$).

Clearly, every tree with $n$ vertices has exactly $n-1$ edges. The 
following well-known theorem gives an upper bound on the sizes of cactus 
graphs. The theorem is explicitly documented in some books, such as the 
textbook \cite{west2001}, or implied in \cite{Lovasz} (refer to the first 
exercise in page 73), yet its original authorship remains a mystery so far.

\begin{theorem}[p.160, \cite{west2001}]
Let $G$ be a cactus with $n$ vertices. Then 
$|E(G)|\le\left\lfloor{\frac{3(n-1)}{2}}\right\rfloor$.  The bound is achieved 
by a set of $  \lfloor (n -1)/2 \rfloor $ triangles sharing
a single vertex, plus one extra edge to a leaf if $n$ is even.
\end{theorem}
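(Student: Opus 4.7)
The plan is to establish the bound via the block decomposition of a cactus, and then verify that the stated construction attains it. First, I would prove the structural observation that every block of a cactus $G$ is either a single edge $K_2$ or a cycle $C_m$ with $m \ge 3$. Indeed, if some block $B$ were $2$-connected with $|V(B)| \ge 3$ but not a cycle, then an ear decomposition of $B$ would yield a theta subgraph $\Theta \subseteq B$; any edge of $\Theta$ would then lie on at least two distinct cycles of $B$, hence of $G$, contradicting the cactus condition.

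Second, I would invoke the standard block--cut tree counting identity. Writing the blocks of $G$ as $B_1, \ldots, B_b$ with $|V(B_i)| = n_i$, and noting that the bipartite block--cut tree on blocks and cut vertices is a tree, a routine double count yields
\[
\sum_{i=1}^{b} (n_i - 1) \;=\; n - 1.
\]

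Third, I would bound the edges within each block and aggregate. A $K_2$-block contributes $|E(B_i)| = 1 = n_i - 1 \le \tfrac{3}{2}(n_i - 1)$, while a cycle block $C_{n_i}$ contributes $|E(B_i)| = n_i$ and satisfies $\tfrac{n_i}{n_i - 1} \le \tfrac{3}{2}$ for all $n_i \ge 3$, with equality precisely when $n_i = 3$. Summing over all blocks and applying the identity from the previous step,
\[
|E(G)| \;=\; \sum_{i=1}^{b} |E(B_i)| \;\le\; \tfrac{3}{2}\sum_{i=1}^{b}(n_i - 1) \;=\; \tfrac{3}{2}(n - 1),
\]
and since $|E(G)|$ is an integer, the desired bound $|E(G)| \le \lfloor 3(n-1)/2\rfloor$ follows.

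Finally, for tightness I would verify the construction directly. If $n$ is odd, glueing $(n-1)/2$ triangles at a common vertex produces a cactus with exactly $3(n-1)/2$ edges; if $n$ is even, glueing $(n-2)/2$ triangles at a common vertex and attaching one extra pendant edge produces a cactus with $3(n-2)/2 + 1 = (3n-4)/2 = \lfloor 3(n-1)/2\rfloor$ edges. The only genuinely non-routine step is the structural lemma in the first paragraph, since it requires invoking the theta-subgraph structure of 2-connected graphs; once blocks are reduced to edges and cycles, the remainder is arithmetic on the block--cut tree.
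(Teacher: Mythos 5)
Your proof is correct. Note that the paper does not prove this statement at all---it is quoted from West's textbook---so the only meaningful comparison is with the paper's own proofs of the analogous bounds for $k$-cacti (Theorems \ref{2or3cactus} and \ref{4cactus}). There the authors argue by induction on the number of blocks: they split $G$ at a cut vertex $v$ into $G_1=G[S\cup\{v\}]$ and $G_2=G-S$, apply the inductive hypothesis to each piece, and recombine via $\lfloor x\rfloor+\lfloor y\rfloor\le\lfloor x+y\rfloor$ (Lemma \ref{f2}), with the base case of a single $2$-connected block handled through the ear-decomposition machinery (Proposition \ref{2c_k}). You instead sum over all blocks simultaneously, using the block--cut-tree identity $\sum_i\bigl(|V(B_i)|-1\bigr)=n-1$ together with the per-block estimate $|E(B_i)|\le\tfrac{3}{2}\bigl(|V(B_i)|-1\bigr)$, and take the floor only once at the end. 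The two arguments have the same underlying content---both reduce to the fact that every block of a cactus is an edge or a cycle, and both charge $\tfrac{3}{2}$ edges per vertex beyond the first in each block---but your global summation avoids both the induction and the repeated floor-function bookkeeping, and it makes the equality analysis transparent (equality in each block forces a triangle, so the extremal graphs are coalescences of triangles plus at most one pendant edge), which matches the stated extremal construction. Your arithmetic verifying that the friendship-type graphs attain $\lfloor 3(n-1)/2\rfloor$ in both parities is also correct.
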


Motivated by Theorem 1, we pose the following natural problem on $k$-cactus 
graphs where $k\ge 2$.

\begin{problem}\label{p1}
Let $G$ be a $k$-cactus with $n$ vertices where $k\ge 2$. Can we establish a  
(tight) upper bound $f(n,k)$ on the sizes of $G$? 
\end{problem}
For the case $k = 2, 3, 4$, we  answer the above problem.  Moreover, the 
corresponding extremal graphs are also 
characterized. But for $k\ge 5$ (especially for larger or more general values 
of $k$), the problem remains open.

The paper is structured as follows. In next section, we introduce some
terminology, definitions, and a few  preliminary lemmas. In Section 3,  we
characterize  $k$-cactus graphs for $2\le k\le 4$ in terms of the block 
decompositions. In Section 4,  we first prove that every 
$2$-connected $k ~(\ge 1)$-cactus graphs with 
$n$ vertices has at most $n+k-1$ edges, and the bound is tight if $n \ge k + 
2$. Next we give  tight upper bounds on the sizes of  $k$-cactus graphs where 
$2\le k\le 4$, and  the corresponding extremal graphs are also characterized. 
We conclude in Section 5 with a discussion of some open questions.

\section{Preliminaries}\label{}

 Let $G$ be a graph.  Let $v$ be a vertex of  $G$. We denote by $N_G(v)$ the 
 set of vertices adjacent to $v$,  and by $I_G(v)$  the set of edges incident 
 with $v$. A {\it component} of $G$ is a connected subgraph 
 $H$ such that no subgraph of $G$ that properly contains $H$ is connected.   A 
 {\it cut-vertex} is a vertex whose removal increases the number of
 components.   We write $G-v$ or $G-S$ for 
 the subgraph obtained by deleting a vertex $v$ or set of vertices $S$. An 
 {\it induced subgraph} is a subgraph obtained by deleting a set of vertices. 
 We  write $G[T]$ for $G-\bar{T}$, where $\bar{T}=V(G)\setminus T$. A {\it 
 block} of $G$ 
 is a maximal connected 
  subgraph of $G$ that has no cut-vertex.  A block is called a {\it $H$-block} 
  if the block is  isomorphic  to the graph $H$. 
 
For a non-negative integer $n$, we denote $K_n$ a complete graph with $n$ 
vertices, and if $n\ge 3$ we denote by $C_n$ a cycle with $n$ vertices.
The {\it union} of graphs $G$ and $H$ is the graph $G \cup H$
with vertex set $V (G) \cup V (H)$ and edge set $E(G) \cup E(H)$.

The subsequent observation follows directly from the definition of blocks.

 \begin{observation}\label{block}
 Let $G$ be a connected graph with at least $2$ vertices. If a block $H$ of $G$ 
 is not a  single edge, then $H$  is $2$-connected.
 \end{observation}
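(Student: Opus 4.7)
The plan is to chase definitions. A $2$-connected graph is one that is connected, has at least three vertices, and contains no cut-vertex. By the definition of a block, $H$ is connected and has no cut-vertex, so the only thing that needs to be verified is that $|V(H)| \ge 3$. The strategy is therefore to rule out the cases $|V(H)| = 1$ and $|V(H)| = 2$ separately, using in each case that $G$ itself is connected with at least two vertices and that $H$ is \emph{maximal} with respect to the no-cut-vertex property.

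First I would dispose of $|V(H)| = 1$. If $H$ consisted of a single vertex $v$, then since $G$ is connected and has at least two vertices, $v$ would have at least one neighbor $u$ in $G$. The subgraph induced on $\{u,v\}$ (together with the edge $uv$) is connected and trivially has no cut-vertex, and it properly contains $H$; this contradicts the maximality of $H$.

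Next, for $|V(H)| = 2$, connectedness of $H$ forces $H$ to be precisely the graph on two vertices joined by an edge, i.e.\ a single edge. This directly contradicts the hypothesis that $H$ is not a single edge. Hence $|V(H)| \ge 3$, and combined with the already-noted connectivity and absence of a cut-vertex, we conclude that $H$ is $2$-connected.

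There is no real obstacle here; the statement is essentially a bookkeeping consequence of the standard definitions, and the only mildly delicate point is ensuring that the single-vertex case is excluded by invoking the maximality clause in the definition of a block rather than by the no-cut-vertex clause (which a one-vertex graph vacuously satisfies).
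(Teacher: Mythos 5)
Your proof is correct, and it simply fills in the definition-chasing that the paper leaves implicit (the paper offers no proof, stating only that the observation ``follows directly from the definition of blocks''). The case analysis on $|V(H)|$, with maximality ruling out the one-vertex case and the hypothesis ruling out the two-vertex case, is exactly the intended argument.
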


  A {\it decomposition} of a graph is a list of subgraphs such that each edge 
  appears in exactly one subgraph in the list.
\begin{lemma}[ \cite{west2001}]\label{blockdec}
Let $G$ be a graph. Then the blocks of $G$ form a decomposition of $G$.
\end{lemma}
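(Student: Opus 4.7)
The plan is to verify the two defining properties of a decomposition: every edge of $G$ lies in some block, and no edge lies in two distinct blocks.

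For the first property, I will take an arbitrary edge $e = uv$ and observe that the subgraph $H_0$ induced by $\{u,v\}$ together with the edge $e$ is connected and has no cut-vertex, since removing either endpoint leaves a single vertex which is trivially connected. Because $G$ is finite, $H_0$ can be extended to a maximal connected cut-vertex-free subgraph, which is by definition a block containing $e$.

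For the second property, I will prove the stronger statement that two distinct blocks share at most one vertex; this at once precludes them sharing an edge, since any shared edge would force both of its endpoints to be shared. Suppose for contradiction that $B_1 \neq B_2$ are blocks with $|V(B_1) \cap V(B_2)| \geq 2$, and aim to show $B_1 \cup B_2$ is itself connected and has no cut-vertex, which will contradict the maximality of $B_1$. Connectedness is immediate from the nonempty intersection. For any candidate cut-vertex $v \in V(B_1 \cup B_2)$, I will pick $u \in V(B_1) \cap V(B_2)$ with $u \neq v$ (available since the intersection has at least two vertices) and split into three subcases according to whether $v$ lies in $B_1 \cap B_2$, in $V(B_1) \setminus V(B_2)$, or in $V(B_2) \setminus V(B_1)$. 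In each subcase, the fact that $B_i - v$ is connected whenever $v \in V(B_i)$ (which uses that $B_i$ has no cut-vertex, together with the convention that $K_2$ minus an endpoint is still connected) lets me route every vertex of $(B_1 \cup B_2) - v$ to $u$.

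The main obstacle I anticipate is handling the degenerate case in which one of the blocks is just an edge $K_2$, where the absence of cut-vertices must be interpreted via the convention that $K_1$ is connected; once this is absorbed, the case analysis described above is routine. With both decomposition properties in hand, the lemma follows directly.
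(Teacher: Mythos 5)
The paper does not prove this lemma; it imports it from West's textbook without proof. Your argument — every edge spans a connected, cut-vertex-free subgraph that extends to a block, and two distinct blocks meet in at most one vertex (else their union would be a larger cut-vertex-free connected subgraph, contradicting maximality) so no edge lies in two blocks — is exactly the standard textbook proof and is correct.
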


Let $G_1$ and $G_2$ be two vertex-disjoint graphs, and let $v_1\in V(G_1), 
v_2\in V(G_2)$. The {\it coalescence} of $G_1$, $G_2$ with respect to $v_1, 
v_2$, denoted by $G_1(v_1)\diamond G_2(v_2)$, is obtained from $G_1$, $G_2$ by 
identifying $v_1$ with $v_2$ and forming a new vertex $u$.  In other 
words, $V(G_1(v_1) \diamond  G_2(v_2))=V(G_1) \cup 
V(G_2) \cup \{u\}\setminus \{v_1, v_2\}$, and $E(G_1(v_1) \diamond  G_2(v_2))= 
\big( 
E(G_1)\setminus I_{G_1}(v_1)\big) \bigcup \big( E(G_2) \setminus 
I_{G_2}(v_2)\big) 
\bigcup \big( \{ux|x\in N_{G_1}(v_1)\cup N_{G_2}(v_2) \big)\}$. If the 
identifying 
vertices are not specified, the coalescence of $G_1$ and $G_2$ can be denoted 
as $G_1\diamond G_2$. For a series of  pairwise disjoint graphs 
$G_1,G_2,\dots, G_t$ where $t\ge 3$, we define their  coalescence as  
$G_1\diamond G_2\cdots \diamond G_t$ where $G_1\diamond G_2\cdots \diamond 
G_{i-1}\diamond G_{i} =(G_1\diamond G_2\cdots 
\diamond G_{i-1})\diamond G_{i}$ for every $ 2\le i\le t$.

Let $G_1,G_2,\dots, G_t$ be pairwise disjoint graphs. Let $\xi$ be a 
graph class that consists of each graph that is a coalescence of 
$G_1,G_2,\dots, G_t$. 

\begin{lemma}\label{edgesame}
Let $G$ be a graph in $\xi$. Then $|V(G)|=\sum_{i=1}^k |V(G_i)|-(t-1)$ and 
$|E(G)|=\sum_{i=1}^k |E(G_i)|$.
\end{lemma}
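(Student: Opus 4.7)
\medskip

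\noindent\textbf{Proof plan.} The plan is to argue by induction on $t$, using the recursive definition $G_1\diamond\cdots\diamond G_t=(G_1\diamond\cdots\diamond G_{t-1})\diamond G_t$ stated just above the lemma. Since every graph in $\xi$ arises from some choice of identification vertices $v_i\in V(G_i)$, it suffices to verify the two equalities for an arbitrary such choice; the totals $\sum|V(G_i)|$ and $\sum|E(G_i)|$ do not depend on that choice, so the statement is really a statement about the operation $\diamond$ itself.

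For the base case $t=2$, I would read the equalities directly off the definition of $G_1(v_1)\diamond G_2(v_2)$. For the vertex count, the definition replaces the two vertices $v_1,v_2$ by a single new vertex $u$, and keeps all other vertices, giving $|V(G_1)|+|V(G_2)|-1$. For the edge count, the definition first deletes $I_{G_1}(v_1)$ and $I_{G_2}(v_2)$ (removing $\deg_{G_1}(v_1)+\deg_{G_2}(v_2)$ edges in total, with no double counting because $G_1$ and $G_2$ are vertex-disjoint), and then reinserts one edge from $u$ to each vertex of $N_{G_1}(v_1)\cup N_{G_2}(v_2)$. Because the $G_i$ are vertex-disjoint, these two neighborhoods are disjoint, so exactly $\deg_{G_1}(v_1)+\deg_{G_2}(v_2)$ edges are added back. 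The net change in edges is therefore zero, which gives $|E(G_1\diamond G_2)|=|E(G_1)|+|E(G_2)|$.

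For the inductive step, assume the lemma for any coalescence of $t-1$ pairwise disjoint graphs. Setting $H:=G_1\diamond\cdots\diamond G_{t-1}$, the induction hypothesis yields $|V(H)|=\sum_{i=1}^{t-1}|V(G_i)|-(t-2)$ and $|E(H)|=\sum_{i=1}^{t-1}|E(G_i)|$. Now $G=H\diamond G_t$, and since the underlying vertex set of $H$ is a subset of $\bigcup_{i<t}V(G_i)$ together with the identification vertices introduced so far, it is still disjoint from $V(G_t)$; thus the base case applies to $H$ and $G_t$, and gives $|V(G)|=|V(H)|+|V(G_t)|-1$ and $|E(G)|=|E(H)|+|E(G_t)|$. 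Substituting the inductive expressions produces the required identities.

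The argument is essentially bookkeeping, and the only point needing care is the observation that successive coalescences preserve vertex-disjointness with the next graph $G_t$, so that the base-case computation (which uses disjointness to rule out overlap in the new neighborhood $N(u)$) remains applicable at every stage; this is where I would be most explicit in the write-up.
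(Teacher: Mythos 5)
Your proof is correct. The paper states this lemma without any proof at all (treating it as immediate bookkeeping from the definition of coalescence), and your induction on $t$, with the base case read directly off the definition of $G_1(v_1)\diamond G_2(v_2)$ and the key observations that vertex-disjointness keeps $N_{G_1}(v_1)$ and $N_{G_2}(v_2)$ disjoint and that each successive coalescence remains disjoint from the next $G_i$, is exactly the right way to fill in those details.
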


\section{Characterization of  \boldmath{$k$}-cactus graphs for \boldmath{$2 \le 
k\le 
4$}}

Clearly, Lemma \ref{blockdec} implies that any graph has a block 
decomposition.
Therefore, if we clarify structures of each block of a graph, the 
 characteristics of the entire 
graph will also become clearer. It is well known that there are several 
equivalent definitions of cactus graphs, one of which is that  a graph is a 
cactus if and only if  each of its blocks  is either a cycle or an edge. 
Similarly, we provide a characterization of $k$-cactus graphs for $2\le k\le 
4$ based on the block decompositions.

By Observation \ref{block}, any block with at least 3 vertices is 2-connected. 
Therefore, some specialized tools for 2-connected graphs are needed.
Let $F$ be a subgraph of $G$.
An {\it ear} of $F$ in $G$ is a nontrival path in $G$ whose vertex-ends lie in 
$F$ but whose internal vertices 
do not. First, we introduce the concept of the ear decompositions. An {\it ear 
decomposition} of $G$ is a decomposition $Q_0, Q_1, \cdots, Q_k$ such that 
$Q_0$ is a cycle and $ Q_i$ for $i\ge 1$ is an ear of $Q_0 \cup Q_1  \cdots 
\cup Q_{i-1}$. We denote the ear decomposition in the mentioned order as $[Q_0, 
Q_1, ..., Q_l]$.

\begin{lemma}[p.162, \cite{west2001}]\label{addear}
Let $G$ be a $2$-connected graph. Then $G\cup Q$ is $2$-connected where $Q$ is 
an ear of $G$ in  $G\cup Q$. 
\end{lemma}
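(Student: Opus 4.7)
The plan is to prove Lemma \ref{addear} by contradiction. Suppose $G\cup Q$ has a cut-vertex $v$; I will derive a contradiction by showing $(G\cup Q)-v$ is connected in every possible location of $v$. First, since $G$ is $2$-connected we have $|V(G)|\ge 3$, so $|V(G\cup Q)|\ge 3$ and the notion of $2$-connectedness is meaningful. Denote the endpoints of the ear $Q$ by $x,y\in V(G)$ (which may coincide if $Q$ is a cycle-like ear with both ends the same vertex, a subtlety I would address separately) and note that the internal vertices of $Q$ lie in $V(Q)\setminus V(G)$.

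Next I would split into two cases based on the location of the putative cut-vertex $v$. In the first case, $v$ is an internal vertex of $Q$. Then $v\notin V(G)$, so $G$ sits intact inside $(G\cup Q)-v$ and remains connected. Removing $v$ from the internal path of $Q$ breaks $Q$ into (at most) two subpaths, each of which still attaches to $G$ through one of the endpoints $x$ or $y$; therefore every vertex of $(G\cup Q)-v$ is connected to $G$, contradicting that $v$ is a cut-vertex. In the second case, $v\in V(G)$. Since $G$ is $2$-connected, $G-v$ is connected. The internal vertices of $Q$, together with the endpoint(s) of $Q$ not equal to $v$, form a path that hangs off $G-v$ through an endpoint lying in $V(G)\setminus\{v\}$ (using that $Q$ has at least one endpoint different from $v$, because $x\neq y$ or the ear has internal vertices; otherwise $Q$ would be trivial). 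Thus $(G\cup Q)-v$ is connected, again a contradiction.

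The routine but only place requiring genuine care is the boundary case where $Q$ has a single endpoint, i.e.\ a closed ear attached at one vertex $x$; here, if $v=x$ one must observe that the internal vertices of $Q$ only connect back to $G$ through $x$ itself, and therefore such a closed ear would have to be the whole of $Q$ being a cycle through $x$. I would handle this by noting that even then removing $x$ leaves the internal vertices of $Q$ as a subpath, which disconnects from $G-x$ only if $x$ were the sole attachment; yet in that situation $x$ is already a cut-vertex of $G\cup Q$ created by $Q$, and one must rule this out by the definition of an ear, which in our convention has two (possibly equal) endpoints but, when used to extend a $2$-connected graph, yields a fresh ear with \emph{two distinct} endpoints whenever $Q$ has internal vertices. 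A clean way to write this up is to observe that if $x=y$ and $Q$ contains internal vertices, then by the standard convention for ear decompositions one may replace $Q$ by two subpaths with distinct endpoints, reducing to the previous case. Since in all subcases $(G\cup Q)-v$ is connected, no cut-vertex exists, completing the proof.
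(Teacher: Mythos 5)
The paper does not prove this lemma at all --- it is quoted from West \cite{west2001}, p.~162 --- so there is no in-paper argument to compare against; your direct case analysis is the standard textbook proof and its core is correct. Both main cases work: if the putative cut-vertex $v$ is internal to $Q$, the intact $2$-connected graph $G$ keeps everything connected because each surviving piece of $Q$ still reaches $G$ through an end-vertex; if $v\in V(G)$, then $G-v$ is connected and the rest of $Q$ hangs off it through an end-vertex distinct from $v$. Together with $|V(G\cup Q)|\ge 3$ and the obvious connectivity of $G\cup Q$, this gives $2$-connectedness.

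The one part you should delete is the entire third paragraph about ``closed ears'' with $x=y$. Under the paper's definition an ear is a \emph{nontrivial path} whose vertex-ends lie in $F$, and a path has distinct end-vertices, so the case $x=y$ simply does not arise and needs no treatment. More importantly, the escape you propose there is false: if one did attach a closed ear at a single vertex $x$, then $x$ genuinely becomes a cut-vertex of $G\cup Q$ (the lemma fails for closed ears --- this is exactly why open ears are required for $2$-connectivity, as opposed to $2$-edge-connectivity), and ``replacing $Q$ by two subpaths with distinct endpoints'' does not repair this, since those subpaths would each have an end-vertex outside $G$ and hence would not be ears of $G$. As written, that paragraph asserts a reduction that cannot work; since the case is vacuous anyway, the fix is simply to invoke the definition of an ear and strike the digression.
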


The following lemma, known as the ``ear decomposition theorem'', gives a 
technique to construct a 2-connected graph from
a cycle and paths. It will be a key 
tool for characterizing  2-connected $k$-cactus graphs for $2\le k\le 4$. 
\begin{lemma}[Whitney, \cite{whitney}]\label{eardecom}
A graph is $2$-connected if and only if it has an ear decomposition.
\end{lemma}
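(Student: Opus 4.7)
The plan is to prove both directions of the equivalence separately.

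For the sufficiency direction (every graph with an ear decomposition is $2$-connected), I would induct on the number of ears. The base case is $Q_0$, which is a cycle and hence $2$-connected. For the inductive step, assume $H_{i-1} = Q_0 \cup Q_1 \cup \cdots \cup Q_{i-1}$ is $2$-connected; then since $Q_i$ is an ear of $H_{i-1}$ in $H_i := H_{i-1} \cup Q_i$, Lemma \ref{addear} gives that $H_i$ is $2$-connected. Iterating, the final graph $G$ is $2$-connected.

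For the necessity direction (every $2$-connected graph admits an ear decomposition), suppose $G$ is $2$-connected (so $|V(G)|\ge 3$). Since $G$ has no cut-vertex, $G$ contains a cycle $Q_0$. I would build the decomposition greedily: assume $H = Q_0 \cup Q_1 \cup \cdots \cup Q_{i-1}$ has been constructed as a subgraph of $G$. If $H = G$, we are done. Otherwise we locate the next ear $Q_i$ according to two cases.

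\textbf{Case 1:} $V(H) = V(G)$ but $E(H) \subsetneq E(G)$. Any edge $e \in E(G)\setminus E(H)$ is a nontrivial path with both endpoints in $V(H)$ and no internal vertices, so it qualifies as an ear of $H$ in $G$, and we set $Q_i = e$.

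\textbf{Case 2:} $V(H) \subsetneq V(G)$. By connectivity of $G$, there is an edge $vu \in E(G)$ with $v \in V(H)$ and $u \notin V(H)$. Since $G$ is $2$-connected, $G - v$ is still connected; as $|V(H)| \ge 3$, the set $V(H)\setminus\{v\}$ is nonempty, so in $G-v$ there is a path from $u$ to some vertex of $V(H)\setminus\{v\}$. Choose such a path $P$ of minimum length. By minimality, no internal vertex of $P$ lies in $V(H)$. Concatenating the edge $vu$ with $P$ yields a nontrivial path $Q_i$ in $G$ whose endpoints are in $V(H)$ and whose internal vertices lie outside $V(H)$; this is the desired ear.

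In either case, $|E(H \cup Q_i)| > |E(H)|$, so the procedure terminates with an ear decomposition of $G$. I expect the main obstacle to be Case 2: one must invoke $2$-connectivity (rather than just connectivity) to avoid the single vertex $v$ when returning to $H$, and then use minimality of the returning path to guarantee that its interior stays outside $V(H)$. Once this is handled, the argument essentially reduces to an iterative construction.
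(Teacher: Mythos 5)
Your proof is correct. Note that the paper does not actually prove this lemma: it is quoted as Whitney's classical theorem and cited to \cite{whitney} (and West's textbook), so there is no internal proof to compare against. Your argument is the standard one — induction via Lemma \ref{addear} for sufficiency, and for necessity the greedy construction starting from a cycle, using $2$-connectivity (connectedness of $G-v$) plus a shortest returning path to extract each new ear — and both directions are carried out soundly, including the key points that the returning path is forced to be nontrivial because $u\notin V(H)$ and that every edge of the new ear is genuinely new, so the ears form a decomposition in the paper's sense.
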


\begin{lemma}[p.162, \cite{west2001}]\label{2_c_ed}
Any two edges are contained in a cycle in a $2$-connected graph.
\end{lemma}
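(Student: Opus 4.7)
The plan is to reduce the edge statement to a vertex analogue via subdivision, then establish the vertex analogue by induction on an ear decomposition.

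Given two edges $e_1=x_1y_1$ and $e_2=x_2y_2$ of the 2-connected graph $G$, I would insert a new degree-two vertex $w_i$ subdividing each $e_i$, obtaining a graph $G^*$. The first step is to check that $G^*$ is 2-connected. For this I invoke Lemma \ref{eardecom} to fix an ear decomposition $[Q_0, Q_1, \ldots, Q_l]$ of $G$, locate the (unique) ear containing $e_i$, and replace $e_i$ within it by the length-two path $x_iw_iy_i$. The modified list is still an ear decomposition (a cycle stays a cycle; an open ear remains an open ear with the same endpoints), so applying Lemma \ref{eardecom} in the reverse direction yields 2-connectedness of $G^*$.

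The second step is to establish the vertex analogue: any two vertices $u,v$ in a 2-connected graph $H$ lie on a common cycle. I would induct on the number of ears in an ear decomposition $[R_0, R_1, \ldots, R_m]$ of $H$, with Lemma \ref{addear} preserving 2-connectedness at each intermediate stage. The base case is a single cycle. In the inductive step, write $H = H' \cup R$ with $R$ an open ear having endpoints $a, b$ in $H'$, and split into cases: (i) both $u, v \in V(H')$, handled by the inductive hypothesis; (ii) both internal to $R$, handled by appending any $a$-$b$ path in the connected graph $H'$ to $R$; (iii) $u \in V(H')$ and $v$ internal to $R$, handled by exhibiting two paths in $H'$, one from $u$ to $a$ and one from $u$ to $b$, meeting only at $u$, then concatenating them with the two portions of $R$ on either side of $v$. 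Finally, applying the vertex analogue to $w_1$ and $w_2$ in $G^*$ yields a cycle $C^*$ through both; since each $w_i$ has degree two in $G^*$, the cycle $C^*$ must traverse $w_i$ via the two edges $x_iw_i$ and $w_iy_i$, and suppressing the subdivision vertices recovers the desired cycle of $G$ containing $e_1$ and $e_2$.

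The main obstacle is case (iii) in the vertex-version induction. The inductive hypothesis by itself only produces a single cycle through two specified vertices, not the two internally disjoint paths from $u$ to $\{a,b\}$ in $H'$ that I need. The cleanest remedy is to strengthen the inductive statement to assert the existence of two internally disjoint paths between any two given vertices of a 2-connected graph (a low-power Menger-type statement), which can be proved by the same ear-based induction and immediately supplies the fan used to close the cycle in case (iii).
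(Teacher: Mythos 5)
The paper does not actually prove Lemma~\ref{2_c_ed}; it is quoted from West's textbook, where the argument is exactly your outer architecture: subdivide the two edges, check that the subdivided graph is still $2$-connected, apply the vertex version (any two vertices of a $2$-connected graph lie on a common cycle), and suppress the degree-two vertices. Your Step 1 (subdivision preserves an ear decomposition, hence $2$-connectedness by Lemma~\ref{eardecom}) and your final suppression step are both sound.

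The gap is in your remedy for case (iii) of the vertex version. There, $v$ is internal to the last ear $R$ with endpoints $a,b$, so (all internal vertices of $R$ having degree two in $H'\cup R$) any two internally disjoint $u$--$v$ paths are forced to leave $v$ along $R$ toward $a$ and toward $b$ respectively; what you need in $H'$ is therefore a \emph{fan}: a $u$--$a$ path and a $u$--$b$ path meeting only at $u$. Your strengthened inductive statement --- two internally disjoint paths between any two \emph{given} vertices --- does not ``immediately supply'' this: two internally disjoint $u$--$a$ paths carry no information about $b$, and two internally disjoint $a$--$b$ paths give a cycle through $a$ and $b$ that need not pass through $u$. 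The fan from $u$ to the two-vertex set $\{a,b\}$ (equivalently, an $a$--$b$ path through $u$, or a common cycle through \emph{three} prescribed vertices) is genuinely stronger than the two-vertex statement and does not follow from it by the same induction without further work. The standard repairs are either (a) the expansion lemma --- add a new vertex $z$ adjacent to $a$ and $b$, show $H'+z$ is $2$-connected, and apply the two-path statement to the pair $u,z$, then delete $z$ to extract the fan --- or (b) abandoning the ear induction for the vertex version and proving it by induction on $d(u,v)$ as West does. As written, your proof stalls at exactly the point you flagged, because the patch you propose is not strong enough to close it.
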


\begin{lemma}\label{addears}
 Let $G$ be a $2$-connected graph and let  $uv$ be an edge. Let $Q$ be an ear 
 of $G$ in $G \cup Q$.  If $uv$ is contained in $k$ cycles in $G$, then $uv$  
 is contained  in at least $k + 1$ cycles in $G \cup Q$.
\end{lemma}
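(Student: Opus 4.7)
The plan is to produce one extra cycle through $uv$ witnessed by an edge of the new ear, and then observe that none of the original $k$ cycles through $uv$ could contain that witness edge. First I would invoke Lemma \ref{addear} to upgrade $G\cup Q$ to a $2$-connected graph, which puts us in the setting where Lemma \ref{2_c_ed} applies.

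Next I would pick any edge $e$ of $Q$; this is legitimate because an ear is required to be a nontrivial path and therefore has at least one edge. Because the internal vertices of $Q$ lie outside $V(G)$ (and, in the degenerate case where $Q$ has length one, the single edge of $Q$ is a newly added edge between two vertices of $G$), we have $E(Q)\cap E(G)=\emptyset$. In particular $e\neq uv$, since $uv\in E(G)$. Applying Lemma \ref{2_c_ed} to the $2$-connected graph $G\cup Q$ with the two distinct edges $uv$ and $e$ produces a cycle $C$ in $G\cup Q$ that contains both $uv$ and $e$.

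I would then finish by a counting argument. The hypothesis gives $k$ pairwise distinct cycles $C_1,\dots,C_k$ of $G$ containing $uv$, and each of these is still a cycle in the supergraph $G\cup Q$. On the other hand $C$ uses the edge $e\in E(Q)\setminus E(G)$, so $C\neq C_i$ for every $i$, since each $C_i$ lies entirely in $G$. Consequently $C_1,\dots,C_k,C$ are $k+1$ distinct cycles of $G\cup Q$ through $uv$, proving the inequality.

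I don't foresee a genuine obstacle: the only mild subtlety is confirming that $E(Q)$ is disjoint from $E(G)$ so that the witness edge $e$ is truly new (and distinct from $uv$); this is immediate from the definition of an ear, but it should be stated explicitly so the reader sees why $C$ cannot coincide with any $C_i$.
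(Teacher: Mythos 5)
Your proposal is correct and follows essentially the same route as the paper's own proof: establish $2$-connectedness of $G\cup Q$ via Lemma \ref{addear}, use Lemma \ref{2_c_ed} to find a cycle through $uv$ and an edge of $Q$, and note that this cycle is new because the original $k$ cycles survive in $G\cup Q$ but lie entirely in $G$. Your explicit remark that $E(Q)\cap E(G)=\emptyset$ just spells out what the paper dismisses with ``it is clear.''
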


\begin{proof}
 First,  the $k$ different cycles in $G$ that contain $uv$ are also 
 preserved  in $G\cup Q$ (i.e., they are also $k$ distinct cycles 
 that  contain $uv$ in $G\cup Q$).   Additionally, by Lemma \ref{addear}, $G 
 \cup Q$ is  still 2-connected, and thus $uv$ and an edge in $Q$ will lie in a 
 common cycle $C'$ in $G\cup Q$ by Lemma \ref{2_c_ed}. It is clear that 
 $C'$ is distinct from  any of the $k$ cycles containing $uv$,  as desired.
\end{proof}

 A {\it $k$-theta graph} denoted by $\theta(l_1,l_2,\dots,l_k)$ where $k\ge 
3$ is the graph  obtained from $k$ internally  disjoint paths of lengths 
 respectively $l_1,l_2,\dots, l_k$, with the same pair of end-vertices. 
Notably, among these $k$ internally disjoint paths, there exists at most one 
path with length one, since we only consider simple graphs. Obviously,  
3-theta 
 graphs are our familiar theta 
 graphs. For short, we refer to a $k$-theta graph as a $\theta_k$-graph.
We denote $\theta'(p,q,r)$  as a graph obtained by adding an ear to a theta 
graph $\theta(p,q,r)$  where both end-vertices of the ear are not the
vertices with degree 3 of $\theta(p,q,r)$.  For short,  we denote the graph  
$\theta'(p,q,r) $ by $\theta_3'$-graph.

We call a $k$-cactus $H$ is {\it nice} if  there exists an edge in $H$
contained in  exactly $k$ cycles.

\begin{observation}\label{heritability}
Let $k$ and $l$ be integers where $0\le k < l$. Then any $k$-cactus is an 
$l$-cactus.
\end{observation}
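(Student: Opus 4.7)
The plan is to derive this directly from the definition of a $k$-cactus, since the statement is essentially a monotonicity observation: a stronger cycle-count restriction implies a weaker one. I would first unpack what ``$k$-cactus'' means in the boundary case $k=0$, interpreting a $0$-cactus as a connected graph in which no edge lies in any cycle, i.e.\ a tree (which is consistent with the paper's convention that trees are included in the cactus family).

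Next I would let $G$ be an arbitrary $k$-cactus and fix an arbitrary edge $e\in E(G)$. By definition, $e$ is contained in at most $k$ cycles of $G$. Since $k<l$, this number is also at most $l$, so $e$ lies in at most $l$ cycles of $G$. As this holds for every edge, and $G$ is connected, $G$ satisfies the defining conditions of an $l$-cactus.

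There is essentially no obstacle in this argument; it is a one-line consequence of the definition, with the only minor subtlety being to note that the condition ``at most $k$ cycles'' is preserved under weakening the bound from $k$ to $l$, and that connectivity is intrinsic to $G$ and hence transfers automatically. I would therefore present the proof as a single short paragraph immediately after the observation, and use it tacitly later when comparing behavior across different values of $k$ (for instance, when reducing cases for $2\le k\le 4$ via the nice-cactus terminology introduced just above).
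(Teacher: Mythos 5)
Your proof is correct and matches the paper's treatment: the paper states this observation without proof precisely because it follows in one line from the definition, exactly as you argue (at most $k$ cycles implies at most $l$ cycles when $k<l$, and connectivity is intrinsic to $G$). Your remark on interpreting the $k=0$ boundary case as a tree is a reasonable clarification of a convention the paper leaves implicit.
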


\begin{lemma}\label{nicecactus}
(i) Every cycle is a nice cactus. (ii) Every  $\theta_k$-graph is
 a nice $k-1$-cactus.(iii) Every $\theta_3'$-graph is a nice $4$-cactus.
\end{lemma}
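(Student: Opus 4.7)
The plan is to handle the three parts in turn. Part (i) is immediate: in $C_n$ the only cycle is $C_n$ itself, so every edge lies in exactly one cycle and $C_n$ is a nice $1$-cactus.

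For part (ii), I would write $\theta(l_1,\dots,l_k)$ as $k$ internally disjoint paths $P_1,\dots,P_k$ joining the two hub vertices $u$ and $v$. The key observation is that every cycle must pass through both hubs (removing them leaves $k$ disjoint paths) and, since the $P_i$ are internally disjoint away from the hubs, any cycle is forced to be $P_i\cup P_j$ for exactly two indices. So cycles biject with unordered pairs $\{i,j\}$, and an edge on $P_i$ lies on $P_i\cup P_j$ for each of the $k-1$ choices of $j\neq i$. Hence every edge is contained in exactly $k-1$ cycles and the graph is a nice $(k-1)$-cactus.

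For part (iii), write $\theta'(p,q,r)=\theta(p,q,r)\cup Q$ with hubs $u,v$, paths $P_1,P_2,P_3$, and an ear $Q$ whose endpoints $a,b$ are interior points of the $P_i$. My key reduction is to pass to an auxiliary multigraph $G^*$ on the branch vertex set $\{u,v,a,b\}$ whose edges are the maximal internally-disjoint subpaths between branch vertices of $\theta'(p,q,r)$; cycles of the original graph biject with cycles of $G^*$, and an edge $e$ of the original graph lies in a cycle $C$ iff the subpath containing $e$ (viewed as an edge of $G^*$) lies in the cycle of $G^*$ corresponding to $C$. I would then split into Case A, where $a,b$ lie on the same $P_i$, and Case B, where they lie on distinct paths. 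In Case A, $G^*$ contains two pairs of parallel edges (the two $u$-$v$ edges $P_2,P_3$, and two $a$-$b$ edges coming from $Q$ and the middle third of the subdivided $P_i$), and a direct enumeration gives $6$ cycles. In Case B, $G^*\cong K_4$ with $4$ triangles and $3$ four-cycles, for a total of $7$ cycles. A direct tally in each case shows every edge lies in at most $4$ cycles with equality attained for at least one edge, confirming that $\theta'(p,q,r)$ is a nice $4$-cactus.

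The main obstacle lies in (iii): one must build $G^*$ correctly in both cases (especially tracking the parallel edges in Case A) and then systematically enumerate the cycles of $G^*$ and tally edge-incidences without omission or double-counting. Everything else is routine bookkeeping.
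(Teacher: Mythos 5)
Your proposal is correct, and parts (i) and (ii) coincide with what the paper treats as a direct verification. For part (iii) you perform the same case split as the paper (ear endpoints on one branch versus on two branches; the paper's intermediate case, where an ear endpoint coincides with a degree-$3$ vertex, is vacuous under its own definition of $\theta_3'$), but you organize the enumeration differently: the paper lists the cycles ad hoc as unions of subpaths, whereas you suppress the degree-$2$ vertices and count cycles in the branch multigraph $G^*$ on $\{u,v,a,b\}$. That reduction is legitimate (every internal vertex of a segment has degree $2$, so a cycle uses all or none of each segment, giving the claimed bijection), and it buys a genuinely cleaner and more robust tally: Case B becomes ``cycles of $K_4$,'' where every edge lies on exactly $2$ triangles and $2$ quadrilaterals, hence exactly $4$ cycles, and Case A becomes a $6$-cycle count in a multigraph with two doubled edges, where the two outer segments of the subdivided branch carry exactly $4$ cycles and all other edges carry $3$. (Indeed the paper's hand enumeration in its Case 1 contains slips --- e.g.\ it lists $P^1\cup P^3$ as a cycle through $Q$ and refers to an undefined $P^4$ --- which your scheme avoids.) The one point you should make explicit is where ``niceness'' comes from in Case A: the edges attaining $4$ cycles are precisely those on the two outer segments, and these segments are nonempty exactly because the ear's endpoints are required to avoid the two degree-$3$ hubs; if both endpoints were hubs the graph would degenerate to a $\theta_4$-graph, where the maximum is $3$. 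With that remark added, your argument is complete.
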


\begin{proof}
It can verify (i) and (ii) directly, so we  only prove (iii).  
By definition of $\theta_3'$-graph, it is obtained by adding an ear 
$Q=q_0q_1\dots q_s$ 
from a theta graph $\theta (l_1,l_2,l_3)$. Let $u$ and $v$ be the unique 
two vertices with degree  $3$ in  $\theta (l_1,l_2,l_3)$. Let 
$P^1=uu_1'u_2'\dots u_{l-1}' v$, $P^2=uu_1''u_2''\dots u_{l-1}'' v$, 
and $P^3=uu_1'''u_2'''\dots u_{l-1}''' v$ be three internally vertex 
disjoint paths connecting  $u$ and $v$ where $|E(P^i)|=l_i$ for $1\le i \le 3$.

We will discuss three cases regarding the placement of the two end-vertices of 
$Q$ at the position of $\theta (l_1,l_2,l_3)$.

\noindent \textbf{Case 1.}  
$\{q_0,q_s\}\subseteq V(P^i) $ and $\{q_0,q_s\}\cap \{u,v\}=\emptyset$ where  
$1\le i\le 3$. Without loss of generality, we assume  $i = 3$, $q_0=u_c'''$ and 
$q_s=u_{c+m}'''$ where $c$ and $m$ are integers and $c\ge 1$ and $m\ge 1$; see 
Fig. \ref{fig1} (1)  for an illustration. Let $P^a=u_c'''(=q_0)u_{c+1}'''\dots 
u_{c+m-1}'''u_{c+m}'''(=q_s)$. Let $P^b$ be the path obtained by replacing the 
subpath $P^a$ with $Q$ in $P^3$.
Now, it is easy to verify that any 
edge on  $P^i$  for $i=1,2$ will be contained in exactly three cycles: $P^i\cup 
P^{3-i}$, $P^i\cup P^3$, $P^i \cup P^b$.
Clearly, $Q$ and $P^a$ are symmetric. Hence we only consider $Q$, and  observe 
that any edge in $Q$ is contained in exactly
three cycles $Q\cup P^a$, $P^1\cup P^3$, and 
$P^2\cup P^b$. We  observe that  any edge of two paths  $uu_1'''\dots 
u'''_{c-1}q_0 $ or $q_su_{c+m+1}'''\dots u'''_{l_3-1}v$
 will be contained in exactly four cycles $P^1\cup P^3$, 
 $P^2\cup P^3$, $P^1\cup P^4$ and $P^2\cup P^4$.

\noindent \textbf{Case 2.} $\{q_0,q_s\}\subseteq V(P^i) $ and $|\{q_0,q_s\}\cap 
\{u,v\}|=1$ where  $1\le i\le 3$.  This case is similar to Case 1, and we  omit 
its proof. See also Fig. \ref{fig1} (2) for an illustration.

\noindent \textbf{Case 3.}  $q_0 \in  V(P^i)\setminus \{u,v\} $ and $q_s \in  
V(P^j)\setminus \{u,v\}$ where  $1\le i,j\le 3$ and $i\ne j$. In this case, 
without loss of generality, 
we assume  $i = 2$ and $j=3$; as shown in Figure 1 (3). The vertices $q_0$ and 
$q_s$ divide $P^2$ and $P^3$ into two subpaths, respectively.
Let $P^2_a=uu_1''u_2''\dots q_0$, $P^2_b=q_0\dots u'''_{l_2-1}v$, 
$P^3_a=uu_1'''u_2'''\dots q_s$, $P^3_b=q_s\dots u'''_{l-1}v$. Considering 
symmetry, we only need to determine the number of cycles containing any edge 
on $P^1$, $Q$, $P^2_a$, respectively. For each edge in $P^1$,  it is 
contained in exactly four cycles $P^1\cup P^2$, $P^1\cup P^3$, $P^1\cup 
P^2_a\cup Q\cup P^3_b$ and $P_1\cup P^2_b\cup Q \cup P^3_a$.
For each edge in $Q$, it is contained 
in exactly four cycles $Q\cup P^2_a\cup P^3_a$, $Q\cup P^2_b\cup P^3_b$, 
$Q\cup P_b^3\cup P^1 \cup P^2_a$ and  $Q\cup P^2_b \cup P^1\cup P^3_a $. 
As for  each edge in $P^2_a$, it is contained 
in exactly four cycles $P^1\cup P^2$, $P^2\cup P^3$, $P^2_a\cup Q \cup P^3_a$ 
and 
$P^2_a\cup Q \cup P^3_b\cup P^1$.

By above discussion, every $\theta_3'$-graph is a nice $4$-cactus.
\end{proof}

\begin{figure}[H]
	\centering
	\includegraphics[scale=0.45]{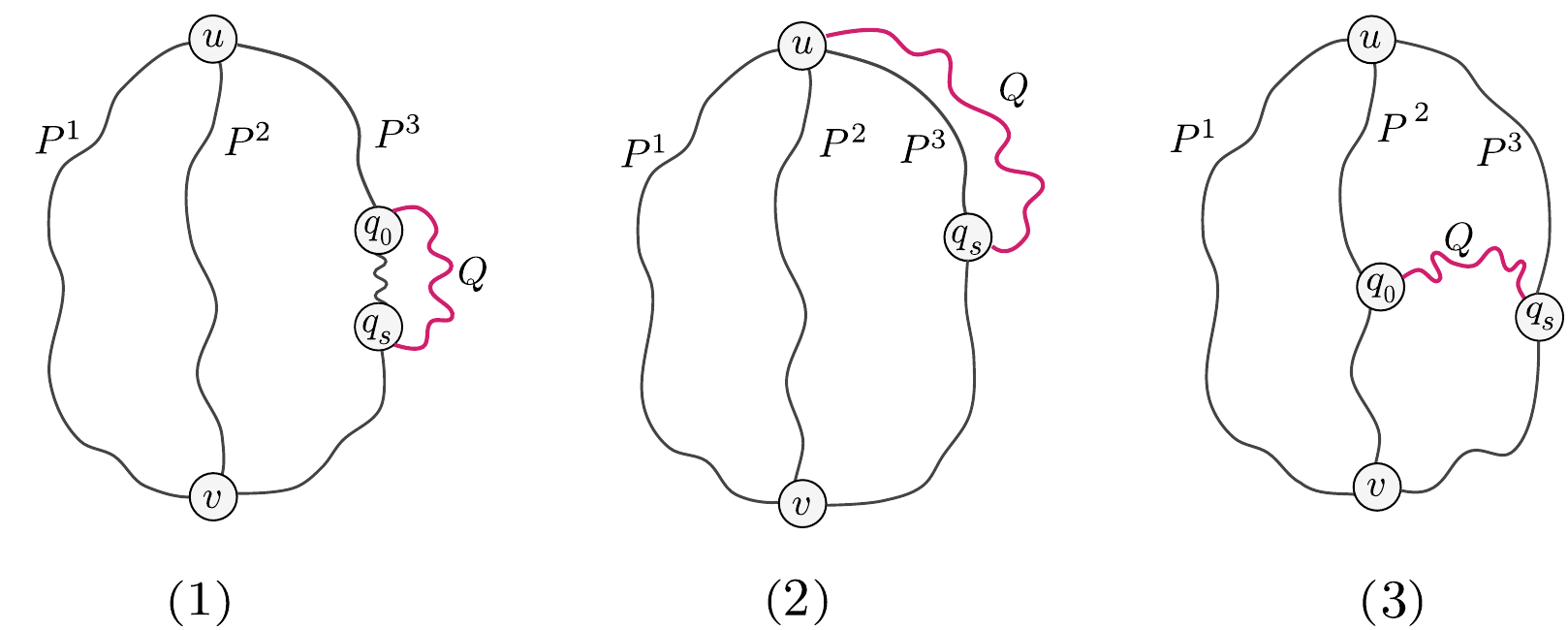}
	\caption{ Three ways of adding the ear $Q$ to $\theta (l_1,l_2,l_3)$.}
	\label{fig1}
\end{figure}

\begin{lemma}\label{2_earadd}
Let $G$ be a $2$-connected $k$-cactus where $k\ge 1$. Let $[Q_0, Q_1,\cdots, 
Q_l]$ be an ear decomposition of $G$ where $Q_0$ is a cycle. Then $l\le k-1$.
\end{lemma}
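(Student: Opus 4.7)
The plan is to fix a single edge of the initial cycle and iterate Lemma \ref{addears} along the ear decomposition. Concretely, pick any edge $uv$ of $Q_0$, and for each $i$ with $0 \le i \le l$ set $G_i := Q_0 \cup Q_1 \cup \cdots \cup Q_i$. I would prove by induction on $i$ the joint claim that $G_i$ is $2$-connected and that $uv$ is contained in at least $i+1$ cycles in $G_i$. The base case $i=0$ is immediate, since $Q_0$ is a cycle (hence $2$-connected) and $uv$ lies on exactly one cycle in $Q_0$, namely $Q_0$ itself.

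For the inductive step, observe that $Q_{i+1}$ is by definition an ear of $G_i$ in $G_{i+1}$. Lemma \ref{addear} then gives that $G_{i+1}$ remains $2$-connected, and Lemma \ref{addears}, applied to the edge $uv$ and the ear $Q_{i+1}$, upgrades the count of cycles through $uv$ from at least $i+1$ in $G_i$ to at least $i+2$ in $G_{i+1}$. Taking $i = l$ shows that $uv$ is contained in at least $l+1$ cycles in $G = G_l$. Since $G$ is a $k$-cactus, $uv$ lies in at most $k$ cycles, so $l+1 \le k$, yielding $l \le k-1$.

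The only subtle point is that each application of Lemma \ref{addears} requires the current ambient graph to be $2$-connected; this is handled uniformly by the $2$-connectedness half of the induction via Lemma \ref{addear}, so no separate verification is needed. There is no real obstacle beyond correctly pairing the two ear lemmas at each stage.
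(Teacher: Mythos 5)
Your proof is correct and follows essentially the same route as the paper: fix an edge of $Q_0$ and iterate Lemma \ref{addears} along the ear decomposition, using Lemma \ref{addear} to keep the intermediate graphs $2$-connected. Your version merely makes the induction and the $2$-connectedness bookkeeping explicit (and states the final inequality $l+1\le k$ the right way around), which is a slight improvement in rigor but not a different argument.
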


\begin{proof}
Let $e$ be an edge of the cycle $Q_0$. Then $e$ is contained  in one cycle. 
By repeatedly applying Lemma \ref{addears},  $e$ will  be contained in at least 
$l+1$ cycles in   $Q_0\cup Q_1 \cdots \cup Q_l$.  Since $G$ is a $k$-cactus, we 
have $k\le l+1$, yielding $l\le k-1$. 
\end{proof}

\begin{theorem}\label{cactus_stru}
Let $G$ be a graph. Then the following statements hold.
\begin{itemize}
\item [(i)] $G$ is a $2$-cactus if and only if each block of $G$  is either 
an edge, a cycle, or a $\theta_3$-graph.
\item [(ii)] $G$ is a $3$-cactus  if and only if each block of $G$  is 
either an edge, a cycle, or a   $\theta_t$-graph where  $3\le t\le 4$.
\item [(iii)] $G$ is a $4$-cactus if and only if each block of $G$  is 
either an edge,  a cycle,  a $\theta_3'$-graph, a   $\theta_t$-graph where  
$3\le t\le 5$.
\end{itemize}
\end{theorem}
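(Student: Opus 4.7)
My plan is to prove both directions of each equivalence by reducing the cycle count at each edge to a count within a single block, using the standard fact (implicit in Lemma~\ref{blockdec}) that any cycle of $G$ is contained in a single block, so the number of cycles of $G$ through an edge $e$ equals the number of cycles through $e$ in the block containing it.

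For the ``if'' direction of each part, I would read off the maximum number of cycles through an edge in each listed block type directly from Lemma~\ref{nicecactus}: $0$ for a single edge, $1$ for a cycle, $t-1$ for a $\theta_t$-graph, and $4$ for a $\theta_3'$-graph. In parts (i), (ii), (iii) these maxima are bounded by $k=2,3,4$ respectively, so $G$ is a $k$-cactus.

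For the ``only if'' direction, I would first apply Observation~\ref{block} to reduce to 2-connected blocks. A 2-connected block $H$ of a $k$-cactus $G$ is itself a 2-connected $k$-cactus (any cycle of $H$ is a cycle of $G$), so by Lemmas~\ref{eardecom} and~\ref{2_earadd} it admits an ear decomposition $[Q_0,Q_1,\ldots,Q_l]$ with $l\le k-1$. I would then classify $H$ by $l$: $l=0$ gives a cycle; $l=1$ gives a cycle plus one ear, which is a $\theta_3$; and $l=2$ gives a $\theta_3$ plus one more ear, which is either $\theta_4$ (when the new ear joins the two degree-$3$ vertices of $\theta_3$) or a $\theta_3'$-graph (otherwise). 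This classification already yields parts (i) and (ii), since for part (ii) the $\theta_3'$-graph is ruled out by Lemma~\ref{nicecactus}(iii): it has an edge in $4$ cycles and so is not a $3$-cactus.

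The main obstacle is the remaining $l=3$ case for part (iii), where I must show every 2-connected 4-cactus with ear-decomposition length $3$ is $\theta_5$. My plan is to set $H'=Q_0\cup Q_1\cup Q_2$, which is 2-connected with an ear decomposition of length $2$, so by the classification above $H'\in\{\theta_4,\theta_3'\}$. If $H'=\theta_3'$, Lemma~\ref{nicecactus}(iii) yields an edge of $H'$ in $4$ cycles of $H'$, and Lemma~\ref{addears} promotes this to at least $5$ cycles of $H=H'\cup Q_3$, contradicting the $4$-cactus hypothesis; so $H'=\theta_4$, and $H$ is $\theta_4$ plus a single ear $Q_3$. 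If $Q_3$ joins the two degree-$3$ vertices of $\theta_4$, then $H=\theta_5$, as desired. Otherwise, mirroring the three placement cases of Lemma~\ref{nicecactus}(iii), I would pick a specific edge on the subpath from a degree-$3$ vertex of $\theta_4$ to an internal endpoint of $Q_3$, and enumerate the simple paths between its endpoints (combining three choices of return path among the remaining $P^j$'s with two choices for traversing the new ``bubble'' formed by $Q_3$) to exhibit at least $5$ distinct cycles through that edge, contradicting the $4$-cactus assumption. The routine but tedious verification of ``$\ge 5$ cycles'' in each placement subcase is where I expect the bulk of the work to go.
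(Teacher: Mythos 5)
Your proposal is correct and follows essentially the same route as the paper: the ``if'' directions via Lemma~\ref{nicecactus} and Observation~\ref{heritability}, the ``only if'' directions via ear decompositions of the $2$-connected blocks with length bounded by Lemma~\ref{2_earadd}, and the $l=3$ case of part (iii) by classifying $Q_0\cup Q_1\cup Q_2$ and eliminating the $\theta_3'$ possibility with Lemma~\ref{addears}. The only divergence is in the final subcase (a $\theta_4$-graph plus an ear not joining both branch vertices --- which have degree $4$, not $3$ as you wrote): you propose enumerating five cycles through a chosen edge directly, whereas the paper deletes one of the four internally disjoint paths to expose a $\theta_3'$-subgraph and then reapplies Lemma~\ref{addears}, a small reduction that avoids the case-by-case cycle count you flag as the bulk of your remaining work.
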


\begin{proof}

Clearly,  ``if"-parts of the three statements (i), (ii) and (iii) follow from 
Lemma \ref{nicecactus} and Observation \ref{heritability}.  Now we prove their 
``only if"-parts, respectively. Let $B$ be a block of $G$. 

For (i), it is evident that $B$ can be an edge. If $B$ is not an edge, then $B$ 
is 
2-connected. Then by Lemmas \ref{eardecom} and \ref{2_earadd}, 
$B$ has an ear decomposition and at most one ear in the decomposition. So $B$ 
is either a cycle or a theta graph, as desired.

For (ii),  clearly, $B$ can be an edge. If not, by Lemma \ref{eardecom}, we can 
assume that $[Q_0,\cdots, Q_l]$ is an ear decomposition of $B$. Furthermore, by 
we have  $l\le 2$ by Lemma \ref{2_earadd}. For $l=0$, then $B$ is a cycle. As 
for $l=1$, $B$ is a theta graph. For $l=2$, then by Lemma 
\ref{nicecactus} (ii)-(iii), $B$ must be a $\theta_4$-graph. 

For (iii),  clearly, $B$ can be an edge. If not, we 
can  similarly assume that $[Q_0,\cdots, Q_l]$ is an ear decomposition of $B$. 
By Lemma \ref{2_earadd}, we have $l\le 3$. For $l\le 2$, then $B$ can be either 
a 
cycle, a theta 
graph, or a $\theta'$-graph (by Lemma \ref{nicecactus} (iii)). As for $l=3$, 
its subgraph $Q_0\cup Q_1\cup Q_2$ is a $\theta_4$-graph or a 
$\theta_3'$-graph. 
Furthermore, if $Q_0\cup Q_1\cup Q_2$ is a 
$\theta_3'$-graph, then by Lemma \ref{nicecactus} (iii), it is known to be a 
nice 4-cactus, which means there exists an edge that lies on 4 cycles.
Then, by Lemma \ref{addears} there exists at least one edge 
in $Q_0\cup Q_1\cup Q_2\cup Q_3$ (i.e. $B$)  being contained $5$-cycles , which 
contradicts the assumption. So 
$Q_0\cup Q_1\cup Q_2$ is a  $\theta_4$-graph. Let $P^1$, $P^2$, $P^3$, $P^4$ be 
four  internally disjoint  
 paths in $Q_0\cup Q_1\cup Q_2$  that connect two (unique)  vertices of degree 
 $4$. We claim that both two end-vertices of $Q_3$ must be the 
 distinct vertices of degree 4. Otherwise, combining the assumption of $Q_0\cup 
 Q_1\cup Q_2$   with the definition of ears,  we can obtain a $\theta_3'$-graph 
 by deleting some path $P^i$ from $B$ where $1\le i\le 4$,  Based on the 
 previous 
 discussion, adding back $P^i$, there is an edge of $B$ that lies on five 
 cycles based on the earlier discussion, a contradiction. This complete the 
 proof.
\end{proof}

\section{Upper bounds and extremal graphs}

Before proving the main theorem, we give three simple lemmas without proof.
\begin{lemma}\label{f0}
Let $G$ be a $\theta_k$-graph. Then  $|V(G)|\ge k+1$ and $|E(G)| =n+k-2$.

\end{lemma}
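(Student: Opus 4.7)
The plan is to unwind the definition of a $\theta_k$-graph and count vertices and edges directly. By definition, $G=\theta(l_1,l_2,\dots,l_k)$ is obtained from $k$ internally disjoint paths $P_1,P_2,\dots,P_k$ sharing a common pair of end-vertices $u,v$, where $P_i$ has length $l_i$. Thus $P_i$ contributes $l_i$ edges and, aside from $u$ and $v$, exactly $l_i-1$ internal vertices, and these internal vertex sets are pairwise disjoint across the $k$ paths.

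First I would count the edges. Because the paths are pairwise edge-disjoint, $|E(G)|=\sum_{i=1}^{k} l_i$. Next I would count the vertices: the only vertices shared among the paths are $u$ and $v$, so $n=|V(G)|=2+\sum_{i=1}^{k}(l_i-1)=2-k+\sum_{i=1}^{k}l_i$. Rearranging gives $\sum_{i=1}^{k}l_i=n+k-2$, and substituting into the edge count yields $|E(G)|=n+k-2$.

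For the lower bound $n\ge k+1$, I would use the fact that $G$ is simple (the paper restricts to simple graphs and the definition of $\theta_k$-graph explicitly notes that at most one path can have length one). Hence at least $k-1$ of the lengths $l_i$ satisfy $l_i\ge 2$, while the remaining length is at least $1$. Therefore $\sum_{i=1}^{k}l_i \ge 2(k-1)+1=2k-1$, which plugged into $n=2-k+\sum l_i$ gives $n\ge 2-k+(2k-1)=k+1$.

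The argument is essentially direct bookkeeping from the definition, so I do not anticipate any serious obstacle; the only point requiring care is invoking the simple-graph assumption to justify that at most one path can have length one, which is what forces the lower bound $n\ge k+1$ to be achievable only when exactly one of the $l_i$ equals $1$ and the rest equal $2$.
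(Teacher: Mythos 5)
Your proof is correct: the direct count of internal vertices and edges over the $k$ internally disjoint paths, together with the simple-graph restriction that at most one $l_i$ equals $1$, gives exactly $|E(G)|=n+k-2$ and $n\ge k+1$. The paper states this lemma without proof (it is listed among the ``simple lemmas without proof''), and your bookkeeping argument is precisely the intended justification, so there is nothing to compare beyond noting that your write-up supplies the omitted details.
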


\begin{lemma}\label{f1}
Let $G$ be a $\theta'$-graph. Then  $|V(G)|\ge 4$ and $|E(G)| =n+2$.
\end{lemma}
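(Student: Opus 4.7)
The plan is to parse any $\theta'$-graph $G$ as the union of a theta graph $\theta(p,q,r)$ together with an attached ear $Q$ of length $s\ge 1$, and then perform a direct counting argument. By the definition of $\theta'$-graph given in Section~3, we may write $G=\theta(p,q,r)\cup Q$ where the two endpoints of $Q$ lie on $\theta(p,q,r)$ but are distinct from the two degree-3 vertices of $\theta(p,q,r)$, and where the internal vertices of $Q$ lie outside $\theta(p,q,r)$.

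The first step is to apply Lemma~\ref{f0} with $k=3$: the underlying theta graph $\theta(p,q,r)$ has $p+q+r-1$ vertices and $(p+q+r-1)+1=p+q+r$ edges. Attaching the ear $Q$ (a path with $s$ edges) then introduces exactly $s-1$ new internal vertices and exactly $s$ new edges. Hence
\[
|V(G)|=(p+q+r-1)+(s-1)=p+q+r+s-2,\qquad |E(G)|=(p+q+r)+s,
\]
and subtracting gives $|E(G)|=|V(G)|+2=n+2$, which is the edge-count claim.

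For the vertex lower bound, I would note that simplicity forces at most one of $p,q,r$ to equal $1$, so $p+q+r\ge 5$; together with $s\ge 1$, this yields $|V(G)|\ge 4$. Tightness should then be demonstrated by exhibiting an explicit extremal example, namely taking $(p,q,r)=(1,2,2)$ with $s=1$ (the ear being a single edge joining the two internal vertices of the two length-2 paths); this gives a valid simple $\theta'$-graph with exactly $4$ vertices and $6=n+2$ edges.

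There is essentially no serious obstacle in this argument: it is a routine counting lemma once $G$ is decomposed in the standard way. The only delicate point worth checking is that, when $s=1$, the ear's two endpoints are not already adjacent in $\theta(p,q,r)$, so as to avoid creating a multi-edge; this is automatic from the definition of an ear (a path in $G$) and from the requirement that both endpoints avoid the two degree-3 vertices.
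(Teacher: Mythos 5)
Your proposal is correct. The paper states this as one of ``three simple lemmas without proof,'' and your argument is exactly the routine verification one would expect: decompose $G$ as $\theta(p,q,r)$ plus an ear of length $s$, count $|V(G)|=p+q+r+s-2$ and $|E(G)|=p+q+r+s$, and use the simplicity constraint ($p+q+r\ge 5$, $s\ge 1$) for the lower bound, with $K_4=\theta(1,2,2)$ plus a single-edge ear witnessing $|V(G)|=4$.
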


\begin{lemma}\label{f2}
Let $x$ and $y$ be two reals. Then $\lfloor x\rfloor+\lfloor y\rfloor 
\leq \lfloor x+y\rfloor$.
\end{lemma}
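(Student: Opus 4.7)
The plan is to reduce the inequality to a trivial observation about integer bounds. The cleanest approach is to use the defining property of the floor: $\lfloor x\rfloor$ is the greatest integer not exceeding $x$. So I would start from the two obvious inequalities $\lfloor x\rfloor\le x$ and $\lfloor y\rfloor\le y$, add them to obtain $\lfloor x\rfloor+\lfloor y\rfloor\le x+y$, and then invoke the fact that $\lfloor x\rfloor+\lfloor y\rfloor$ is itself an integer. Since $\lfloor x+y\rfloor$ is by definition the largest integer not exceeding $x+y$, any integer bounded above by $x+y$ must be bounded above by $\lfloor x+y\rfloor$, which immediately gives the claim.

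An equivalent alternative would be to write $x=\lfloor x\rfloor+\{x\}$ and $y=\lfloor y\rfloor+\{y\}$ with fractional parts in $[0,1)$, then split $\lfloor x+y\rfloor=\lfloor x\rfloor+\lfloor y\rfloor+\lfloor\{x\}+\{y\}\rfloor$ and note that the last summand is nonnegative because $\{x\}+\{y\}\ge 0$. Both approaches are one-line arguments; no obstacle is expected, since this is a standard textbook property of the floor function included here only to streamline the arithmetic of bounds such as $\lfloor 3(n-1)/2\rfloor$ that will appear in the extremal estimates of Section 4.
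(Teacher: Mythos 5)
Your argument is correct: from $\lfloor x\rfloor\le x$ and $\lfloor y\rfloor\le y$ you get that the integer $\lfloor x\rfloor+\lfloor y\rfloor$ does not exceed $x+y$, hence does not exceed $\lfloor x+y\rfloor$ by the maximality in the definition of the floor; the fractional-part variant is equally valid. The paper states this lemma explicitly without proof (it is listed among the ``simple lemmas without proof''), so there is nothing to compare against, but your one-line justification is exactly the standard argument one would supply.
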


Now we  investigate some upper bounds on the sizes for  2-connected 
$k$-cactus graphs.

\begin{proposition}\label{2c_k}
Let $G$ be a $2$-connected $k$-cactus with $n$ vertices where  $k\ge 1$. Then
 $|E(G)|\le n+k-1$. Moreover, the bound is tight for $n\ge k+2$.
\end{proposition}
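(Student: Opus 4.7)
The plan is to prove both halves of the proposition via the ear decomposition machinery already assembled. For the upper bound, I would begin by invoking Lemma \ref{eardecom} to fix an ear decomposition $[Q_0,Q_1,\dots,Q_l]$ of $G$, and then apply Lemma \ref{2_earadd} to conclude that $l\le k-1$. The heart of the argument is a simple double count along the decomposition: if $Q_0$ is a cycle on $n_0$ vertices, and each ear $Q_i$ (for $i\ge 1$) contributes $m_i$ internal vertices, then $Q_i$ contributes $m_i+1$ new edges. Summing gives
\[
n=n_0+\sum_{i=1}^{l}m_i,\qquad |E(G)|=n_0+\sum_{i=1}^{l}(m_i+1)=n+l\le n+k-1,
\]
which yields the desired bound.

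For the tightness assertion when $n\ge k+2$, I would exhibit an explicit extremal graph, namely a $\theta_{k+1}$-graph on $n$ vertices. By Lemma \ref{nicecactus}(ii) every $\theta_{k+1}$-graph is a (nice) $k$-cactus, and it is clearly $2$-connected. By Lemma \ref{f0} applied with parameter $k+1$, such a graph with $n$ vertices has exactly $n+(k+1)-2=n+k-1$ edges, matching the upper bound.

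The only small item to check is that a $\theta_{k+1}$-graph on exactly $n$ vertices actually exists for every $n\ge k+2$. For this I would argue by construction: take two end-vertices and connect them by $k+1$ internally disjoint paths, one of length $1$ and the remaining $k$ of length at least $2$; this uses at least $k$ internal vertices (hence at least $k+2$ vertices in total), and any additional vertices can be absorbed by lengthening a single path. Hence every $n\ge k+2$ is realisable.

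I do not anticipate a genuine obstacle here, since the ear decomposition already does all the work: Lemma \ref{2_earadd} converts the ``$k$-cactus'' hypothesis into the combinatorial bound $l\le k-1$, and the edge count along an ear decomposition is automatic. The only subtle point is making sure the extremal construction is admissible for every $n\ge k+2$, and the length-variation trick above handles this uniformly.
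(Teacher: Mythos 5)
Your proposal is correct and follows essentially the same route as the paper: an ear decomposition via Lemma \ref{eardecom}, the bound $l\le k-1$ from Lemma \ref{2_earadd}, the vertex/edge count $|E(G)|=n+l$, and tightness via a $\theta_{k+1}$-graph using Lemmas \ref{nicecactus}(ii) and \ref{f0}. Your extra check that a $\theta_{k+1}$-graph exists on every $n\ge k+2$ vertices is a small detail the paper leaves implicit, but it does not change the argument.
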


\begin{proof}
 By Lemma \ref{eardecom},  we can assume  that $[Q_0,Q_1,\cdots, Q_r]$ is an 
 ear decomposition  of $G$. Given the definition
of ears, $n=  |V(Q_0)|+ 
\sum_{i=1}^r (|V(Q_i)|-2)$ and 
$|E(G)|=|E(Q_0)|+\sum_{i=1}^r|E(Q_i)| 
=|V(Q_0)|+\sum_{i=1}^r(|V(Q_i)|-1)=n+r$. From Lemma \ref{2_earadd}, it follows 
that  $r\le k-1$, and thus $|E(G)|\le n+k-1$. Moreover, by Lemma 
\ref{nicecactus} (ii), every $\theta_{k+1}$-graph with $n$ is a 
$k$-cactus.  By Lemma \ref{f0}, it has at least $k+2$ vertices and exactly 
$n+k-1$ edges. So the bound is tight for $n\ge k+2$.
\end{proof}

The bound in Proposition \ref{2c_k} may not be tight when $n \le k-1$. In the 
following, we obtain some possible better bounds for cases where $n$ takes very 
small values.

\begin{proposition}\label{2c_small}
Let $G$ be a $2$-connected $k$-cactus with $n$ vertices where  $k\ge 1$. Then
\begin{itemize}
\item [(i)] for $n=3$ and every $k$, then $G\cong K_3$ and $|E(G)|=3$.
\item [(ii)] for $n=4$, if $k=2,3$ then $|E(G)|\le 5$, with equality  if and 
only if $G\cong \theta(1,2,2)$; if $k=4$, then $|E(G)|\le 6$, with equality  if 
and only if $G\cong K_4$; 
\item [(iii)] for $n=5$, if $k=2$, then  $|E(G)|\le 6$, with equality  if and 
only if $G\cong\theta(1,2,2)$ or $\theta (1,2,3)$; if $k=3$, 
$|E(G)|\le 7$,  with equality  if and 
only if $G\cong \theta(1,2,2,2)$; if $k=4$, then $|E(G)|\le 7$  
with equality  if and 
only if $G \cong \theta(1,2,2,2)$, $\tilde{\theta}_1$ or  
$\tilde{\theta}_2$, where $\tilde{\theta}_1$ and  $\tilde{\theta}_2$ are shown 
in Fig. \ref{fig2} 
(b)-(c), respectively.
\end{itemize}
\end{proposition}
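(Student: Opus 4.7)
The plan is to enumerate, for each small $(n,k)$, all 2-connected $k$-cactus graphs using the block-structure classification of Theorem \ref{cactus_stru}. Since $G$ is 2-connected, $G$ itself is a single block, so $G$ must belong to one of the types listed there: a cycle, a $\theta_t$-graph for an appropriate small $t$, or a $\theta_3'$-graph. Edge counts are read off from Lemmas \ref{f0} and \ref{f1}, and $k$-cactus membership is verified via Lemma \ref{nicecactus} combined with Observation \ref{heritability}.

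For (i), the only 2-connected graph on $3$ vertices is $K_3$, giving $|E(G)| = 3$. For (ii), by Theorem \ref{cactus_stru} the candidates on $4$ vertices are $C_4$, $\theta(1,2,2) = K_4 - e$, and $K_4$. Here the key observation is that $K_4$ is obtainable from $\theta(1,2,2)$ by adding the ear joining its two degree-$2$ vertices, so $K_4$ is a $\theta_3'$-graph; by Lemma \ref{nicecactus}(iii) it is a nice $4$-cactus and in particular not a $3$-cactus. Since $\theta(1,2,2)$ is a nice $2$-cactus by Lemma \ref{nicecactus}(ii), the bounds follow: for $k \in \{2,3\}$, the maximum $5$ is realized uniquely by $\theta(1,2,2)$, and for $k=4$ the maximum $6$ is realized uniquely by $K_4$.

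For (iii), I would enumerate the 2-connected $k$-cacti on $5$ vertices by Theorem \ref{cactus_stru}: the cycle $C_5$ ($5$ edges), the $\theta_3$-graphs $\theta(2,2,2)$ and $\theta(1,2,3)$ ($6$ edges), the unique $\theta_4$-graph $\theta(1,2,2,2)$ ($7$ edges), no $\theta_5$-graph (the system $\sum l_i = 8$ with at most one $l_i = 1$ is infeasible), and the $\theta_3'$-graphs on $5$ vertices. The cases $k = 2, 3$ fall out directly from this list. For $k = 4$, I would further enumerate $\theta_3'$-graphs: the identity $n = p+q+r+\ell-2 = 5$ restricts the base $\theta(p,q,r)$ to $(1,2,2), (1,2,3), (2,2,2)$ with corresponding ear length $\ell \in \{1,2\}$. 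Running through the allowed ear placements (permitting at most one ear-endpoint to coincide with a degree-$3$ vertex of the base, consistent with Case 2 of the proof of Lemma \ref{nicecactus}(iii)) and checking isomorphism, these constructions collapse into exactly two isomorphism classes $\tilde\theta_1$ and $\tilde\theta_2$, distinguished by the degree sequences $(3,3,3,3,2)$ and $(4,3,3,2,2)$. Together with $\theta(1,2,2,2)$, degree sequence $(4,4,2,2,2)$, this yields the three claimed extremal graphs.

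The main obstacle I anticipate is this last isomorphism reduction: the same $\theta_3'$-graph often arises from several different bases $(p,q,r)$ and different ear placements, so a naive construction-level enumeration overcounts. My plan is to use the degree sequence as the primary invariant to cleanly separate $\tilde\theta_1$ from $\tilde\theta_2$, and then to exhibit explicit vertex relabellings to show that all constructions yielding the same degree sequence are mutually isomorphic. Once this enumeration is settled, the remainder of the argument is a mechanical case check driven by Lemmas \ref{f0}, \ref{f1}, and \ref{nicecactus}.
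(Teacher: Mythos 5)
Your proposal is correct and follows essentially the same route as the paper: both classify the $2$-connected $k$-cacti of small order via the structure theorem (Theorem \ref{cactus_stru}, with the paper additionally invoking Proposition \ref{2c_k} for the $k=2,3$ bounds in part (iii)) and then read off edge counts; you merely carry out explicitly the enumeration of $\theta_3'$-graphs on five vertices that the paper dismisses with ``one can check''. One incidental point: your (correct) list of $6$-edge $2$-cacti on $5$ vertices consists of $\theta(2,2,2)$ and $\theta(1,2,3)$, which confirms that the ``$\theta(1,2,2)$'' appearing in part (iii) of the statement is a typo for $\theta(2,2,2)$, since $\theta(1,2,2)$ has only four vertices.
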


\begin{proof}
(i). It is obvious.

(ii). By Theorem  \ref{cactus_stru} (ii) and (iii), $K_4$ is a nice 
$4$-cactus  which means that $K_4$ is not a $k$-cactus where $k<4$. Moreover, 
it is easy to see that $\theta(1,2,2)$ is the unique $2$-cactus with $5$ edges. 
So  (ii) can be derived directly.

(iii). By Proposition \ref{2c_k}, $E(G)\le 6$ if $k=2$ or $E(G)\le 7$  for 
$k=3$. It is easy to verify that  graphs with sizes achieving  the equalities 
are $\theta(1,2,2)$ for $k=2$ and $\theta(1,2,2,2)$ for $k=3$, respectively.
 For $k=4$,  as every $\theta_4$-graph has at least $6$ 
vertices,  $G$ can be a cycle, or a 
$\theta_3'$-graph by  Theorem \ref{cactus_stru} (iii). Combining this with 
Lemmas 
\ref{f0} and \ref{f1}, we have $|E(G)|\le 7$.
One can check that  a 2-connected 4-cactus with 5 vertices and 7 edges is 
either $\theta(1,2,2,2)$, $\tilde{\theta}_1$, or  $\tilde{\theta}_2$. 
\end{proof}

\begin{figure}[H]
	\centering
	\includegraphics[scale=1.2]{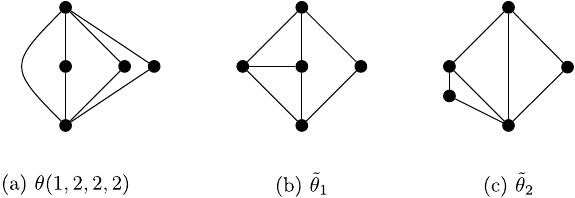}
	\caption{All 2-connected $4$-cactus graphs with $5$ vertices and $7$ edges.}
	\label{fig2}
\end{figure}

Now we prove the main theorems.

\begin{theorem}\label{2or3cactus}
Let $G$ be a $k$-cactus  with $n$ vertices where $2\le k\le 3$. Then 
$|E(G)|\le \left\lfloor \frac{(2k+1)(n-1)}{k+1}\right\rfloor$.  Moreover, the 
bound is tight.
\end{theorem}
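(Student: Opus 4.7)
The plan is to leverage the block-decomposition characterizations of Theorem \ref{cactus_stru} together with a standard identity. Writing the blocks of $G$ as $B_1,\dots,B_t$ with $n_i=|V(B_i)|$ and $m_i=|E(B_i)|$, Lemma \ref{blockdec} gives $|E(G)|=\sum_{i=1}^{t}m_i$, and a short block-cut tree count (or induction on leaf blocks, mirroring Lemma \ref{edgesame}) yields $\sum_{i=1}^{t}(n_i-1)=n-1$. So it suffices to prove the per-block inequality
\[
m_i \;\le\; \frac{2k+1}{k+1}\,(n_i-1)
\]
for every block $B_i$; summing and applying Lemma \ref{f2} then delivers the floor.

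To verify this inequality I would enumerate the allowed block types supplied by Theorem \ref{cactus_stru}. A single edge gives $m_i/(n_i-1)=1$. A cycle $C_p$ gives $p/(p-1)\le 3/2$. A $\theta_s$-block with $3\le s\le k+1$ has by Lemma \ref{f0} $m_i=n_i+s-2$ and $n_i\ge s+1$, so
\[
\frac{m_i}{n_i-1}\;=\;1+\frac{s-1}{n_i-1}\;\le\;1+\frac{s-1}{s}\;=\;\frac{2s-1}{s},
\]
which is increasing in $s$ and hence maximized over $3\le s\le k+1$ at $s=k+1$, yielding exactly $(2k+1)/(k+1)$. Since $(2k+1)/(k+1)\ge 3/2$ for $k\ge 1$, the theta case dominates, so the per-block inequality holds uniformly. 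Summing over $i$ and invoking Lemma \ref{f2} finishes the upper bound.

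For tightness I would construct extremal graphs by coalescing copies of the saturated block $\theta(1,2,2,\dots,2)$, i.e.\ the $\theta_{k+1}$-graph on $k+2$ vertices with $2k+1$ edges (the equality case above). Writing $n-1=q(k+1)+r$ with $0\le r\le k$, taking $q$ copies of this theta coalesced at one common vertex produces a graph on $q(k+1)+1$ vertices and $q(2k+1)$ edges, hitting the bound when $r=0$. For the remaining residues $r$, I would attach at the shared vertex a short gadget — a pendant edge, a triangle, a $\theta(1,2,2)$, etc.\ — tailored to each residue of $n-1$ modulo $k+1$ (so a total of $3$ sub-patterns for $k=2$ and $4$ sub-patterns for $k=3$), using Lemma \ref{edgesame} to track the vertex/edge counts and confirm the resulting graph remains a $k$-cactus meeting $\lfloor(2k+1)(n-1)/(k+1)\rfloor$ exactly.

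The main obstacle I anticipate is the tightness direction: the per-block inequality is a routine calculation once Theorem \ref{cactus_stru} and Lemma \ref{f0} are in hand, but the extremal construction requires checking, for each residue class of $n-1$ modulo $k+1$, that the attached gadget (i) brings the edge count precisely up to the floor and (ii) does not create an edge lying in more than $k$ cycles — which for attachments at a cut-vertex follows from the fact that cycles cannot cross cut-vertices, so cycles are confined to single blocks.
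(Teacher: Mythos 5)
Your proposal is correct, but it takes a somewhat different route from the paper's. The paper proves the upper bound by induction on the number of blocks: it splits $G$ at a cut-vertex into $G_1=G[S\cup\{v\}]$ and $G_2=G-S$, applies the induction hypothesis to each piece, and combines the two floors via Lemma \ref{f2}; the single-block base case is settled by Proposition \ref{2c_k} (giving $|E|\le n+k-1$ for $n\ge 5$) together with the small-order checks in Proposition \ref{2c_small}, and tightness is deferred entirely to the extremal characterizations in Theorems \ref{2ex} and \ref{3ex}. You instead sum a per-block density bound $m_i\le\frac{2k+1}{k+1}(n_i-1)$ over all blocks at once, using the identity $\sum_i(n_i-1)=n-1$ and the explicit block classification of Theorem \ref{cactus_stru}; your computation of the ratios (edge gives $1$, cycle gives at most $3/2$, $\theta_s$ gives at most $(2s-1)/s$, maximized at $s=k+1$) is correct, and the final floor follows simply from integrality of $|E(G)|$ (Lemma \ref{f2} is not really needed there). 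Your route buys a transparent identification of the unique densest block, the $\theta_{k+1}$-graph on $k+2$ vertices, which feeds directly into your residue-by-residue tightness construction (whose gadgets --- pendant edge, triangle, $\theta(1,2,2)$ --- do check out against the floor, and which is legitimate since cycles are confined to blocks); the paper's induction avoids invoking the full structural theorem in the inductive step but pushes the modular bookkeeping into the extremal theorems. Both arguments rest on the same underlying block decomposition, so the difference is one of organization rather than substance, and I see no gap in yours.
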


\begin{proof} 
We apply induction  on the 
number of blocks of $G$.   Let $l$ be the number of
blocks of $G$. Clearly, if $l=1$, then $|E(G)|= n-1$ if $n \leq 2$.  
 If $n=3$, then $|E(G)|= 3$ by Proposition 
\ref{2c_small} (i). If $n=4$, then $|E(G)|\le 5$ by Proposition \ref{2c_small} 
(ii). If $n\ge 5$, then $|E(G)|\le 
n+k-1$ by  Proposition \ref{2c_k}. In either case, 
$|E(G)|\leq\lfloor (2k+1)(n-1) / k \rfloor$ for $k=2,3$.

A graph that has more than one block is not a single block, so it has
a cut-vertex $v$. Let $S$ be the vertex set of one component of $G-v$. A graph 
that has more than one block is not a single block, so it has a cut-vertex $v$. 
Let $S$ be the vertex set of one component of $G-v$. Let $G_1=G[S \cup\{v\}]$, 
and let $G_2=G-S$. Both $G_1$ and $G_2$ are $k$-cactus graphs, and every block 
of $G$ is a block in exactly one of $G_1$ and $G_2$. Thus each has fewer 
blocks than $G$, and we can apply the induction hypothesis to obtain 
$|E\left(G_i\right)| \leq$ $\left\lfloor 
(2k+1)\left(|V\left(G_i\right)|-1\right) / 
(k+1)\right\rfloor$ for $i=1,2$. Since $v$ belongs to both graphs $G_1$ and 
$G_2$,  
we have 
$|V(G_1)|+|V(G_2)|=n+1$.  We thus have
\begin{linenomath}
\begin{align}
|E(G)|&=|E\left(G_1\right)|+|E\left(G_2\right)| \notag\\
&\leq\left\lfloor\frac{(2k+1)(|V(G_1)|-1)}{k+1}\right\rfloor+\left\lfloor
\frac{(2k+1)(|V(G_2)|-1)}{k+1}\right\rfloor  \notag \\
&\leq\left\lfloor\frac{(2k+1)(|V(G_1)|-1)}{k+1}+\frac{(2k+1)(|V(G_2)|-1)}{k+1}\right\rfloor
 \label{eq1} \\
&=\left\lfloor\frac{(2k+1)(n-1)}{k+1}\right\rfloor, \notag
\end{align}
\end{linenomath}
in which the inequality (\ref{eq1}) is follows from Lemma \ref{f2}.

We delay explaining the tightness of the bound until Theorems \ref{2ex} and 
\ref{3ex}.
\end{proof}

Here, one might wonder whether the bound in Theorem \ref{2or3cactus} still 
holds for  $k\ge 4$. Regrettably, the answer may be negative. For example, it 
is no 
longer correct when $k=4$; see Theorem \ref{4cactus}. 

\begin{theorem}\label{4cactus}
Let $G$ be a $4$-cactus with $n$ vertices. Then 

\begin{linenomath}
$$|E(G)|\le\begin{cases}
2n-2 &\text{if}~  n \equiv  1 \pmod 3, \\
2n-3 &\text{otherwise.} \\
\end{cases}$$
\end{linenomath}
 Moreover, these bounds are tight.
\end{theorem}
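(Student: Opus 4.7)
The plan is to exploit the block decomposition of Theorem \ref{cactus_stru} (iii) together with the identity $\sum_{j=1}^{m}(|V(B_j)|-1) = n-1$ for the blocks $B_1,\dots,B_m$ of $G$, a standard consequence of the block--cut-vertex tree (and implicit in Lemma \ref{edgesame}).

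First I would establish a uniform per-block inequality: every block $B$ permitted by Theorem \ref{cactus_stru} (iii) satisfies $|E(B)|\le 2(|V(B)|-1)$, with equality if and only if $B\cong K_4$. This reduces to a short case check against the formulas in Lemmas \ref{f0} and \ref{f1}: a single edge gives $1<2$; a cycle $C_m$ with $m\ge 3$ gives $m<2(m-1)$; a $\theta_t$-block with $3\le t\le 5$ and $|V(B)|\ge t+1$ gives $|V(B)|+t-2<2(|V(B)|-1)$ (since $t<|V(B)|$); and a $\theta_3'$-block gives $|V(B)|+2\le 2(|V(B)|-1)$, with equality only when $|V(B)|=4$, in which case $B$ is the unique simple graph on $4$ vertices with $6$ edges, namely $K_4$.

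Summing over blocks then yields
\begin{equation*}
|E(G)| = \sum_{j=1}^{m}|E(B_j)| \le 2\sum_{j=1}^{m}(|V(B_j)|-1) = 2(n-1),
\end{equation*}
which already settles the case $n\equiv 1\pmod 3$. For the improvement to $|E(G)|\le 2n-3$ when $n\not\equiv 1\pmod 3$, I would argue by rigidity: equality in the display would force every $B_j\cong K_4$, hence $n-1 = 3m$, contradicting $n\not\equiv 1\pmod 3$; so at least one block contributes a strict deficit, and the refined bound follows.

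For tightness I would exhibit three explicit constructions, each obtained by gluing blocks at cut-vertices in any tree-like fashion. When $n=3k+1$, take $k$ copies of $K_4$, producing $2n-2$ edges. When $n=3k+2$, take $k$ copies of $K_4$ together with one pendant edge, producing $2n-3$ edges. When $n=3k$, take $k-1$ copies of $K_4$ together with one triangle $C_3$, again producing $2n-3$ edges. Each resulting graph is a $4$-cactus since every cycle lies within a single block and each individual block is itself a $4$-cactus (in particular $K_4$, by the direct count that every edge of $K_4$ lies on exactly $4$ cycles). The one subtle point in the whole argument is the equality characterization on $\theta_3'$-blocks that pins down $K_4$ as the unique extremal block type; once that is secured, the rest is arithmetic bookkeeping and direct verification.
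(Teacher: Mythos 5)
Your proof is correct, and it takes a genuinely different route from the paper's. The paper proceeds by induction on the number of blocks: it splits $G$ at a cut-vertex into $G_1$ and $G_2$, invokes the 2-connected bounds (Propositions \ref{2c_k} and \ref{2c_small}) for the single-block base case, and then runs a three-way case analysis on the residues of $|V(G_1)|$ and $|V(G_2)|$ modulo $3$ to propagate the two-tier bound. You instead prove a uniform per-block inequality $|E(B)|\le 2(|V(B)|-1)$ directly from the block classification of Theorem \ref{cactus_stru}(iii) together with Lemmas \ref{f0} and \ref{f1}, sum it against the identity $\sum_j(|V(B_j)|-1)=n-1$, and obtain the $2n-3$ refinement from the rigidity statement that equality forces every block to be $K_4$ (whence $n\equiv 1\pmod 3$). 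Your per-block case check is sound: the only delicate point is that a $\theta_3'$-graph on $4$ vertices is exactly $K_4$, and indeed $K_4$ arises by adding the ear $ab$ to $\theta(1,2,2)$ between its two degree-$2$ vertices. What your approach buys is the elimination of the modular case analysis and of the induction altogether, and it makes visible \emph{why} the bound drops by one when $n\not\equiv 1\pmod 3$ (no all-$K_4$ block decomposition exists), which feeds naturally into the extremal characterization of Theorem \ref{4ex}. What the paper's induction buys is a template that transfers verbatim to the $k=2,3$ cases of Theorem \ref{2or3cactus}, where the bound involves a floor function and the clean per-block summation would need the subadditivity of floors (Lemma \ref{f2}) anyway. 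Your tightness constructions match the paper's extremal graphs and are verified correctly, including the count that every edge of $K_4$ lies on exactly four cycles.
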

\begin{proof}

We also use induction on the number of blocks of $G$ (which is similar to 
Theorem \ref{2or3cactus} except for some minor details). Let $l$ be 
the number of blocks of $G$. We first consider the case when $l=1$. If $n \leq 
2$  $|E(G)|= n-1$ and  $|E(G)|= 3$ if $n=3$. By Proposition \ref{2c_small} 
(ii)--(iii),  $|E(G)| 
\le 6$ if $n=4$ and $|E(G)| \le 7$  if $n =5$. As for $n\ge 6$, we have $|E(G)| 
\le n+3$ by Proposition \ref{2c_k}. In either case, $|E(G)|\leq 2n-2$ when $n 
\equiv  1 \pmod 3$ and $|E(G)|\leq 2n-3$ when $n \equiv  0 ~\text{or}~ 2 \pmod 
3$.

A graph that has more than one block is not a single block, so it has
a cut-vertex $v$. Let $S$ be the vertex set of one component of $G-v$. A graph 
that has more than one block is not a single block, so it has a cut-vertex $v$. 
Let $S$ be the vertex set of one component of $G-v$. Let $G_1=G[S \cup\{v\}]$, 
and let $G_2=G-S$. Both $G_1$ and $G_2$ are 4-cactus graphs, and every block of 
$G$ is 
a block in exactly one of $G_1$ and $G_2$, and thus each has fewer blocks 
than $G$. Hence by the induction hypothesis, we have
$|E(G_i)| \leq 2|V(G_i)|-2$  when $|V(G_i)| \equiv  1 \pmod 3$
and $|E(G_i)|\leq 2|V(G_i)|-3$ when $|V(G_i)|\equiv  0 ~\text{or}~ 2\pmod 
3$ for $i=1,2$.

Since $v$ belongs to both graphs $G_1$ and 
$G_2$,  we have $|V(G_1)|+|V(G_2)|=n+1$. We discuss three cases.

\noindent \textbf{Case 1.}  $n \equiv  0 \pmod 3$.  In this case, $|V(G_i)| 
\equiv 1 \pmod 3$ and 
$|V(G_{2-i})| \equiv 2 \pmod 3$ where $i=1$ or $i=2$,  or $|V(G_i)| \equiv 
2 \pmod 3$ for $i=1,2$.  In the first case, without loss of generality, we may 
assume that  $|V(G_1)| \equiv 1~ \pmod 3$ and 
$|V(G_{2})| \equiv 2 \pmod 3$.  We thus have
$|E(G)|=|E\left(G_1\right)|+|E\left(G_2\right)| \leq (2|V(G_1)|-2) + 
(2|V(G_2)|-3) = 2n-3.$ For the second case, we have
$|E(G)|=|E\left(G_1\right)|+|E\left(G_2\right)| \leq (2|V(G_1)|-3) + 
(2|V(G_2)|-3) = 2n-4<2n-3,$ as desired.

\noindent \textbf{Case 2.} 
 $n \equiv 1  \pmod 3$. Then $|V(G_i)| \equiv 0 \pmod 3$ and 
$|V(G_{2-i})| \equiv 2 \pmod 3$ where $i=1$ or $i=2$,  or $|V(G_i)| \equiv 
1 \pmod 3$ for $i=1,2$.  In the first case, we may 
assume that  $|V(G_1)| \equiv 0 \pmod 3$ and 
$|V(G_{2})| \equiv 2 \pmod 3$.  We thus have
$|E(G)|=|E\left(G_1\right)|+|E\left(G_2\right)| \leq (2|V(G_1)|-3) + 
(2|V(G_2)|-3) = 2n-4<2n-2$. For the second case, we have
$|E(G)|=|E\left(G_1\right)|+|E\left(G_2\right)| \leq (2|V(G_1)|-2) + 
(2|V(G_2)|-2) = 2n-2,$ as desired.

\noindent \textbf{Case 3.} 
$n \equiv 2 \pmod 3$. Then $|V(G_i)| \equiv 1\pmod 3$ and 
$|V(G_{2-i})| \equiv 2 \pmod 3$ where $i=1$ or $i=2$,  or $|V(G_i)| \equiv 
0 \pmod 3$ for $i=1,2$.  In the first case, we may assume that  $|V(G_1)| 
\equiv 1~ \pmod 3$ and $|V(G_{2})| \equiv 0 \pmod 3$ .  We thus have
$|E(G)|=|E\left(G_1\right)|+|E\left(G_2\right)| \leq (2|V(G_1)|-2) + 
(2|V(G_2)|-3) = 2n-3.$ For the second case, we have
$|E(G)|=|E\left(G_1\right)|+|E\left(G_2\right)| \leq (2|V(G_1)|-3) + 
(2|V(G_2)|-3) = 2n-4<2n-3$.

Thus we finish our proof.
\end{proof}

A $k$-cactus graph  on $n$ vertices with the largest possible number of 
edges is called  {\it extremal} for $n$ and $k$. Specifically, 
any extremal $2$-cactus with $n$ vertices has 
$\left\lfloor \frac{5(n-1)}{3}\right\rfloor$ edges.  Note that the number of 
edges of extremal $2$-cactus graphs is the upper bound  in Theorem 
\ref{2or3cactus} when $k=2$.

\begin{theorem}\label{2ex}
Let $G$ be an extremal $2$-cactus with $n\ge 1$ vertices. The following 
statements 
hold true.
\begin{itemize}
\item [(1)] If $n\equiv 0 \pmod 3$, then $G$ is obtained  by 
 coalescing  $\frac{n-3}{3}$ 
copies of $\theta(1,2,2)$ and one copy of $K_3$.
\item [(2)] If $n \equiv 1 \pmod 3$ and $n \ne 1$, then
$G$ is obtained  by coalescing $\frac{n-1}{3}$ copies of $\theta(1,2,2)$ when 
$n\ge 2$; if $n=1$, then $G\cong K_1$.
\item [(3)]  If  $n\equiv 2 \pmod 3$, then $G$ is either obtained  by 
coalescing by $\frac{n-2}{3}$ 
copies of $\theta(1,2,2)$ and a copy of $K_2$, or obtained  by 
coalescing  $\frac{n-5}{3}$ copies of $\theta(1,2,2)$ and either one 
copy of $\theta(1,2,3)$ or two copies of $K_3$.   
\end{itemize}
\end{theorem}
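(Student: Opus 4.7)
The plan is to use the block decomposition together with a simple ``efficiency'' argument to pin down the block multiset of every extremal graph. By Theorem \ref{cactus_stru} (i), each block $B$ of a $2$-cactus is either an edge, a cycle, or a $\theta_3$-graph, and iterating Lemma \ref{edgesame} along the block-cut tree yields
\begin{equation*}
\sum_B(|V(B)|-1) = n-1, \qquad \sum_B |E(B)| = |E(G)|.
\end{equation*}
For each admissible block type I would compute the slack $s_B := 5(|V(B)|-1) - 3|E(B)|$. A quick case check gives $s_{K_2}=2$, $s_{K_3}=1$, $s_{C_m}=2m-5\ge 3$ for $m\ge 4$, $s_{\theta(1,2,2)}=0$, and $s_{\theta(l_1,l_2,l_3)} = 2(l_1+l_2+l_3)-8 \ge 2$ for any $\theta_3$-block with at least five vertices. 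In particular $s_B\ge 0$ with equality if and only if $B\cong\theta(1,2,2)$, and summing yields the key identity $\sum_B s_B = 5(n-1) - 3|E(G)|$.

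Setting $|E(G)| = \lfloor 5(n-1)/3\rfloor$ for an extremal $G$ makes $\sum_B s_B$ equal to $0$, $1$, or $2$ according as $n\equiv 1,\ 0,\ 2 \pmod 3$. The next step is to enumerate the nonnegative-integer partitions of these small targets using only the admissible slack values. For $n\equiv 1\pmod 3$ the only partition of $0$ forces every block to be $\theta(1,2,2)$, and the vertex equation fixes the count at $(n-1)/3$ (the case $n=1$ being the trivial $K_1$). For $n\equiv 0\pmod 3$ the only partition of $1$ is a single $K_3$-block with all remaining blocks having slack $0$, so $G$ consists of $(n-3)/3$ copies of $\theta(1,2,2)$ and one $K_3$. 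For $n\equiv 2\pmod 3$, the partitions of $2$ split into either one slack-$2$ block (namely $K_2$, $\theta(1,2,3)$, or $\theta(2,2,2)$) or two $K_3$-blocks, with all other blocks of slack $0$; the vertex equation then determines the number of $\theta(1,2,2)$ blocks in each sub-case.

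Once the block multiset is pinned down, the characterisation follows because by Lemma \ref{edgesame} $|V(G)|$ and $|E(G)|$ depend only on the multiset of blocks (and not on the particular block-cut tree shape), so any coalescence of the prescribed blocks realises an extremal graph. The step I expect to be the main obstacle is the case $n\equiv 2\pmod 3$: one must verify carefully that the only slack-$2$ $\theta_3$-blocks are $\theta(1,2,3)$ and $\theta(2,2,2)$ (noting that every $\theta_3$-graph on $\ge 6$ vertices has slack $\ge 4$), and then confirm that the list in the theorem captures precisely the correct extremal families---this may in fact lead to refining the statement so as to include $\theta(2,2,2)$ alongside $\theta(1,2,3)$.
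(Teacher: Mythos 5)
Your proposal is correct in substance and takes a genuinely different route from the paper. The paper first proves the bound $\lfloor 5(n-1)/3\rfloor$ by induction on the number of blocks (Theorem \ref{2or3cactus}), and then, for the extremal characterization, establishes a chain of separate claims --- every block is edge-maximal for its order, every block has at most $5$ vertices, there is at most one $5$-vertex block, at most two $3$-vertex blocks, and at most one $K_2$-block --- each verified by excising the offending blocks and comparing edge counts, followed by a case analysis on $n\bmod 3$. Your slack functional $s_B=5(|V(B)|-1)-3|E(B)|$, together with the identities $\sum_B(|V(B)|-1)=n-1$ and $\sum_B|E(B)|=|E(G)|$, compresses all of those claims into the single equation $\sum_B s_B=5(n-1)-3|E(G)|\in\{0,1,2\}$ and reduces the classification to listing the partitions of $0$, $1$, $2$ into admissible slack values. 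This is cleaner: it re-derives the upper bound as a by-product (so Lemma \ref{f2} and the block induction are not needed for this theorem), and it makes the completeness of the extremal list transparent rather than the outcome of several exclusion arguments.

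Two repairs are needed, neither fatal. First, the general formula should read $s_{\theta(l_1,l_2,l_3)}=2(l_1+l_2+l_3)-10$, not $-8$; your quoted values ($s_{\theta(1,2,2)}=0$, and $s_B\ge 2$ for theta blocks on at least $5$ vertices) agree with the corrected formula, so this is only a typo. Second, the identity $\sum_B(|V(B)|-1)=n-1$ deserves a one-line induction on the block-cut tree, since Lemma \ref{edgesame} is stated only for coalescences at a single vertex. Beyond that, your method correctly exposes a point the paper misses: $\theta(2,2,2)\cong K_{2,3}$ is a $2$-connected $2$-cactus on $5$ vertices with $6$ edges and slack $2$, so coalescing $(n-5)/3$ copies of $\theta(1,2,2)$ with one copy of $\theta(2,2,2)$ is also extremal when $n\equiv 2\pmod 3$. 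This family is absent from statement (3) (the appearance of the $4$-vertex graph $\theta(1,2,2)$ in the $n=5$ case of Proposition \ref{2c_small}(iii) is evidently a typo for $\theta(2,2,2)$, and the omission propagates into Claim \ref{c3} of the paper's proof), so your suggested refinement of the statement is warranted rather than a defect of your argument.
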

\begin{proof}

First, we can easily check that the number of edges of these graphs is $ 
\left\lfloor \frac{5(n-1)}{3}\right\rfloor$, which also shows that the bound 
given by Theorem \ref{2or3cactus}  for $k=2$ is tight.

 Let $G$ be an extremal 2-cactus with $n$ vertices. Let $B_1,\ldots, 
B_t$ be the blocks of $G$, where $t$ is the number of blocks of $G$.
For the sake of simplicity and smoothness in the subsequent proof, in what 
 follow, we can assume (without loss of generality) that all the blocks of $G$ 
 share a common vertex $u$,  otherwise, we can obtain a new graph with that 
 assumption 
by re-coalescing the blocks of $G$, and Lemma \ref{edgesame} implies that the 
new graph is also an extremal graph with same order.  But we need note that  
although we made the assumption on $G$,  the following claims or properties on 
$G$ hold for any extremal 2-cactus.

\begin{claim}\label{c1}
If $B$ is a block of $G$, then $B$ has the maximum possible number of edges.
\end{claim}

\begin{proof}
Clearly, if $B$ does not have the maximum number of edges, we can replace $B$ 
with another block that has the same number of vertices but more edges (while 
still being a 2-cactus graph). In this case, the number of edges of $G$ would 
increase, contradicting the premise.
\end{proof}

\begin{claim}\label{c2}
Each block of $G$ has at most $5$ vertices.
\end{claim}
\begin{proof}
Suppose that there exists a block, namely $B_1$,  with  at least 6 vertices. If 
$t=1$, i.e., $G\cong B_1$, then by Proposition \ref{2c_k}, $|E(G)|\le n+1 
<\left\lfloor\frac{5(n-1)}{3}\right\rfloor$, a contradiction. If $G$ has more 
than two blocks. Recall that $u$ is the unique cut vertex of $G$. Let 
$G'=G[V(G)\setminus V(B_1-u)$]. Clearly, $B_1$ and $G'$ are connected. 
Thus 
\begin{linenomath}
\begin{align*}
 |E(G)|&=|E(G')+|E(B_1)|\\
 &\le\left\lfloor\frac{5(|V(G')|-1)}{3}\right\rfloor+(|V(B_1)|+1)\\
 &=\left\lfloor\frac{5(|V(G')|-1)}{3}+|V(B_1)|+1\right\rfloor\\
 &=\left\lfloor  
 \frac{5(n-1)}{3}-\frac{2|V(B_1)|-8}{3}\right\rfloor.\label{2c_ex}
\end{align*}
\end{linenomath}
Since $|V(B_1)|\ge 6$, we have $\frac{2|V(B_1)|-8}{3}\ge \frac{4}{3}> 1$. 
Thus $|E(G)|\le \left\lfloor  \frac{5(n-1)}{3}\right\rfloor-1$, a contradiction.
\end{proof}

\begin{claim}\label{c3}
$G$ has at most one block with exactly $5$ vertices;  if the block exists,  it 
is a $\theta(1,2,3)$-block.
\end{claim}
\begin{proof}
Suppose that $G$ has two or more blocks  with exactly $5$ vertices.  Let  
$B_1$ and $B_2$ be such blocks. Combining  Claim \ref{c1}  with Proposition 
\ref{2c_small}  (iii), we have $|E(B_1)|=|E(B_2)|=6$.  Let $B^*=B_1\cup B_2$ 
and $G'=G[V(G)\setminus V(B^*-u)]$. Clearly,  $|V(B^*)|=9, |E(B^*)|=12$ and
$|V(G')|+|V(B^*)|=n+1$. Therefore, we have $|E(G)|=|E(G')|+|E(B^*)|
 \le\left\lfloor\frac{5(|V(G')|-1)}{3}\right\rfloor+12
 =\left\lfloor\frac{5(n-1)}{3}-\frac{4}{3}\right\rfloor 
 \le\left\lfloor\frac{5(n-1)}{3}\right\rfloor -1$, a contradiction. 
 Furthermore, by Claim \ref{c1} and  Theorem \ref{2c_k}, if there is a 
 block with exactly  $5$ vertices, then it is a $\theta(1,2,3)$-block.
\end{proof}

\begin{claim}\label{c4}
$G$ has at most two blocks with exactly $3$ vertices;  if these blocks exist,  
they are $K_3$-blocks.
\end{claim}
\begin{proof}
 By Claim \ref{c1}, any block with exactly $3$ vertices is isomorphic to $K_3$.
Suppose for a contradiction, that $G$ has three or more blocks  with  exactly  
$3$ vertices. Let  $B_1,B_2$ and $B_3$ be such blocks. Let $B^*=B_1\cup B_2 
\cup B_3$ and $G'=G[V(G)\setminus V(B^*-u)]$. Clearly,  $|V(B^*)|=7, 
|E(B^*)|=9$ and $|V(G')|+|V(B^*)|=n+1$. Therefore, we have 
$|E(G)|=|E(G')|+|E(B^*)|\le\left\lfloor\frac{5(|V(G')|-1)}{3}\right\rfloor+9
 =\left\lfloor\frac{5(n-1)}{3}\right\rfloor -1$, a contradiction. 
 \end{proof}

\begin{claim}
$G$ has at most one $K_2$-block.
\end{claim}

\begin{proof}
If there are two or more $K_2$-blocks, then we can add an edge between two 
different leaves of $G$, and the resulting new graph is  still a 
2-cactus graph, but with more edges, a contradiction. 
\end{proof}

Due to Claims 1-5, we deduce the following property of $G$.
\begin{property}\label{pro1}
 Any block  of $G$ is isomorphic to  
either $K_2$, $K_3$, $\theta(1,2,2)$, or  $\theta(1,2,3)$. Additionally, the 
number of blocks isomorphic to $K_2$ or $\theta(1,2,3)$ is at most $1$ each,  
and the number of blocks isomorphic to $K_3$ is at most $2$.
\end{property}

Now, we discuss three cases according to the remainder of $n$ divided by 3.
We rewrite the upper bound $\left\lfloor\frac{5(n-1)}{3}\right\rfloor$ as:
\begin{linenomath}
$$|E(G)|=\begin{cases}
5 l-2 &\text{if}~ n=3l, \\
5 l &\text{if}~ n=3 l+1, \\
5 l+1 &\text{if}~ n=3 l+2.
\end{cases}$$
\end{linenomath}
\vspace{0.5cm}

\noindent \textbf{Case 1.} $n=3l$. We need to prove that 
each block of $G$ is either $K_3$ or $\theta(1,2,2)$, and the number of 
$K_3$-blocks is exactly one. Due to Property \ref{pro1},  it 
suffices to show that 
$G$ does not contain $\theta (1,2,3)$ and $K_2$ as its blocks, and $G$ contain 
exactly one $K_3$. Suppose that $G$ contains a block, namely  $B_1$, which is 
isomorphic to $\theta(1,2,3)$ or $K_2$.  Let $G'=G[V(G')\setminus 
V(B_1-u)$].  If $B_1\cong \theta(1,2,3)$, then clearly, $ |V(B)|=5$ and 
$|V(G')|=n-4=3l-4=3(l-2)+2$. Then $|E(G)|=|E(G')|+|E(B_1)|\le 
(5(l-2)+1)+6=5l-3<5l-2$, a contradiction. 
Similarly, if $B_1\cong  K_2$, then  clearly $ |V(B)|=2$ and 
$|V(G')|=n-1=3l-1=3(l-1)+2$. Then $|E(G)|=|E(G')|+|E(B_1)|\le 
(5(l-1)+1)+1=5l-3<5l-2$, a contradiction. 

Now we prove that $G$ has exactly a $K_3$-block. If there does not  exist 
$K_3$-blocks, then by the previous analysis every 
block is a copy of $\theta(1,2,2)$. Then $|E(G)|$  will be a multiple of 3 plus 
1, which contradicts the premise of $s = 0$.  By Property 
\ref{pro1}, $G$ has at most two $K_3$-blocks. If there exist two blocks, namely 
$B_1$ and $B_2$, isomorphic to $K_3$, we assume that $B=B_1\cup B_2$ and  
$G'=G[V(G')\setminus V(B-u)$], then  $ |V(B)|=5$ and $|V(G')|=3(l-2)+2$.
Similarly,  $|E(G)|=|E(G')|+|E(B)|\le 5l-1$, a contradiction. Thus, we have 
proved the desired result.

\noindent \textbf{Case 2.} $n=3l+1$. Clearly, $n$=1, we have $G\cong K_1$. So 
we 
assume that $n\ge 4$. Suppose 
that there is a block of $G$, namely $B_1$, which is not
isomorphic to $\theta(1,2,2)$. Let $G'=G[V(G)\setminus 
(V(B_1-u)$]. For $B_1\cong K_2$,  note that 
$|V(G')|=3l$. Thus $|E(G)|=|E(G')|+|E(B_1)|\le (5l-2)+1=5l-1< 5l$, a 
contradiction. For $B_1\cong K_3$, now $|V(G')|=3(l-1)+2$. Thus 
$|E(G)|=|E(G')|+|E(B_1)|\le (5(l-1)+1)+3=5l-1< 5l$, a contradiction.
For $B_1\cong \theta(1,2,3)$, now $|V(G')|=(3l+1)-4=3(l-1)$. Thus, 
$|E(G)|=|E(G')|+|E(B_1)|\le (5(l-1)-2)+6=5l-1\le 5l$, a contradiction. 
Therefore, every block of $G$ is isomorphic to $\theta(1,2,2)$.

\noindent \textbf{Case 3.} $n=3l+2$.  First, it is impossible that 
every block of $G$ is isomorphic to $K_3$; otherwise, $n \equiv 1 \pmod 3$, 
which contradicts the assumption. Next, we claim that if  $G$ has
a block, namely $B_1$, isomorphic to $K_3$, then $G$ will contain exactly two 
$K_3$-blocks . Let $B_1$ then let $G'=G[V(G)\setminus 
V(B_1-u)]$. Clearly, $G'$ is also an extreme graph with $3l$ vertices.
So, by the proof of Case 1, $G'$ has exactly one $K_3$-block, and hence $G$ 
has  exactly two $K_3$-blocks.  Therefore, the following three cases (i)-(iii) 
are all possible: (i) $G$ has exactly one $K_2$-block; (ii) $G$ has
exactly one $\theta(1,2,3)$-block; (iii) $G$ has two $K_3$-blocks. Moreover, 
we  show that these three cases are mutually exclusive. Assume (i) and (ii) 
occur together, i.e.,  $G$ contains one  $K_2$-block, namely $B_1$, and one 
$\theta(1,2,3)$-block, namely $B_2$.  Then let $B=B_1\cup B_2$ and  
$G'=G[V(G)\setminus V(B-u)]$. Clearly, $|V(B)|=6$ and $|E(B)|=7$. So $|V(H)|= 
3l+2-5=3(l-1)$ and $|E(H)|\le 5(l-1)-2 =5l-7$. Hence $|E(G)|=|E(B)|+|E(G')|\le 
7+5l-7=5l<5l+1$. Similarly, if (i) and (iii), or (ii) and (iii) occur 
simultaneously, we can similarly conclude that $|E(G)|$ is less than $5l+1$, 
which contradicts the assumption.

Based on the analysis of the graph blocks in the three cases above, we have 
directly proven this theorem.
\end{proof}

The characterization theorems for extremal 3-cactus graphs  and 4-cactus graphs 
are given here. The proof techniques of the following two theorems are
similar to Theorem \ref{2ex}. For the sake of article conciseness, we  leave 
the similar and tedious proof details to the reader.

\begin{theorem}\label{3ex}
Let $G$ be an extremal $3$-cactus with $n\ge 1$ vertices. The following 
statements 
hold true.
\begin{itemize}
\item [(1)]If $n\equiv 0 \pmod 4$, then $G$ is obtained  by 
 coalescing  $\frac{n-4}{4}$ 
copies of $\theta(1,2,2,2)$ and one copy of $\theta(1,2,2)$.
\item [(2)] If $n \equiv 1 \pmod 4$ and $n\ne 1$, then $G$ is obtained  by 
coalescing $\frac{n-1}{4}$ 
copies of $\theta(1,2,2,2)$; if $n= 1$, then $G\cong K_1$.
\item [(3)]  If  $n\equiv 2 \pmod 4$, then $G$ is either obtained  by 
coalescing by $\frac{n-2}{4}$ 
copies of $\theta(1,2,2,2)$ and a copy of $K_2$, or obtained  by 
coalescing  $\frac{n-6}{4}$ copies of $\theta(1,2,2,2)$ and either one 
copy of $\theta(2,2,3)$, one copy of  $\theta(2,2,2,2)$,  or one copy of 
$\theta(1,2,2)$ and $K_3$.
\item [(4)]  If  $n\equiv 3 \pmod 4$, then $G$ is either obtained  by 
coalescing by $\frac{n-3}{4}$ 
copies of $\theta(1,2,2,2)$ and a copy of $K_3$, or obtained  by 
coalescing  $\frac{n-7}{3}$ copies of $\theta(1,2,2,2)$ and two 
copies of $\theta(1,2,2)$. 
\end{itemize}
\end{theorem}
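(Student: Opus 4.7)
My plan is to adapt the proof of Theorem \ref{2ex} almost verbatim, replacing the $2$-cactus building blocks by their $3$-cactus analogues. First, a routine calculation using Lemma \ref{edgesame} verifies that each of the four families listed in (1)--(4) has exactly $\lfloor 7(n-1)/4\rfloor$ edges, which also confirms that the bound of Theorem \ref{2or3cactus} is tight for $k=3$.

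For the converse, fix an extremal $3$-cactus $G$ on $n$ vertices. As in Theorem \ref{2ex}, I would first re-coalesce the blocks so that they share a common vertex (this is cost-free by Lemma \ref{edgesame}). Writing $B_1,\ldots,B_t$ for the blocks and setting $c_i := |E(B_i)| - \tfrac{7}{4}(|V(B_i)|-1)$, we have
\[
|E(G)| - \tfrac{7}{4}(n-1) = \sum_{i=1}^{t} c_i.
\]
By Theorem \ref{cactus_stru}(ii) each block is an edge, a cycle, a $\theta_3$-graph, or a $\theta_4$-graph, and a short computation (via Lemma \ref{f0}) lists every possible value of $c_i$: namely $c_i=0$ only for $\theta(1,2,2,2)$; $c_i=-\tfrac14$ only for $\theta(1,2,2)$; $c_i=-\tfrac12$ only for $K_3$; $c_i=-\tfrac34$ only for $K_2$ or a $6$-vertex $\theta_4$-graph (i.e.\ $\theta(2,2,2,2)$ or $\theta(1,2,2,3)$); and $c_i<-\tfrac34$ for every other admissible block. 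On the other hand, extremality forces $\sum_i c_i$ to equal $0,\,-\tfrac14,\,-\tfrac12$, or $-\tfrac34$ according to whether $n \bmod 4$ is $1,\,0,\,3$, or $2$.

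This already yields the analogues of Claims 1--5 of Theorem \ref{2ex}: every block attains the maximum number of edges for its order; no block has more than $6$ vertices; at most one $K_2$-block appears; the numbers of $K_3$-blocks and of $\theta(1,2,2)$-blocks are tightly bounded; and the $6$-vertex blocks, when present, must be of the specific maximum-size $\theta_4$-type. With these structural restrictions in hand, the theorem is reduced to solving, in each residue class, the finite non-negative integer system
\[
\sum_{j} q_j\bigl(|V(B^{(j)})|-1\bigr) = n-1, \qquad \sum_{j} q_j\, c^{(j)} = \Big\lfloor \tfrac{7(n-1)}{4} \Big\rfloor - \tfrac{7(n-1)}{4},
\]
where $q_j$ counts the number of blocks of each admissible type $B^{(j)}$, and then checking realisability as a coalescence. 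The main obstacle, and the reason the authors call the details ``frustrating,'' lies in the residue classes $n\equiv 2,\,3\pmod 4$, where the target surplus can be expressed as a sum of small negative $c^{(j)}$-values in several combinatorially distinct ways (for $n\equiv 2$: a single $K_2$; a single $6$-vertex $\theta_4$-block; or a $\theta(1,2,2)$ paired with a $K_3$; and analogous alternatives for $n\equiv 3$), each of which has to be examined individually in order to extract precisely the list in (1)--(4). Apart from this bookkeeping, every step is a direct $k=3$ reprise of the $k=2$ argument.
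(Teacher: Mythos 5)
The paper offers no proof of this theorem at all --- it explicitly delegates the ``frustrating details'' to the reader as a repeat of Theorem \ref{2ex} --- so there is nothing to compare line by line; your deficit-counting scheme $c_i=|E(B_i)|-\tfrac{7}{4}(|V(B_i)|-1)$ together with $\sum_i(|V(B_i)|-1)=n-1$ is a sound and in fact cleaner systematization of that method, and your table of admissible $c_i$-values is correct. The gap is in the final enumeration step, which is the entire content of the theorem. For $n\equiv 2\pmod 4$ the target is $\sum_i c_i=-\tfrac{3}{4}$, and besides the decompositions you list ($-\tfrac{3}{4}$ alone, and $-\tfrac{1}{4}-\tfrac{1}{2}$) there is a third: $-\tfrac{1}{4}-\tfrac{1}{4}-\tfrac{1}{4}$, i.e.\ three $\theta(1,2,2)$-blocks together with $\tfrac{n-10}{4}$ copies of $\theta(1,2,2,2)$. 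This is realizable --- for $n=10$, three copies of $\theta(1,2,2)$ sharing a vertex form a $3$-cactus with $15=\lfloor 7\cdot 9/4\rfloor$ edges --- and so it produces an extremal $3$-cactus missing both from your parenthetical list and from item (3) of the theorem. As written, your sketch therefore does not establish the stated classification; carried out exhaustively, it instead shows the statement must be amended to include this configuration.

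A smaller point that your own computation already surfaces but that you should make explicit: the block $\theta(2,2,3)$ named in item (3) has $6$ vertices and only $7$ edges, hence $c=-\tfrac{7}{4}$, and cannot occur in an extremal graph; the intended block is the $8$-edge graph $\theta(1,2,2,3)$, exactly as your list of $c=-\tfrac{3}{4}$ blocks indicates (likewise $\tfrac{n-7}{3}$ in item (4) should read $\tfrac{n-7}{4}$). None of this invalidates your method --- the accounting and the structural reductions are right --- but the proof is incomplete until the integer system is actually solved in each residue class and every solution is either realized or ruled out, and doing so honestly changes the conclusion in case (3).
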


\begin{theorem}\label{4ex}
Let $G$ be an extremal $4$-cactus with $n\ge 1$ vertices. The following 
statements 
hold true.
\begin{itemize}
\item [(1)]If $n\equiv 0 \pmod 3$, then $G$ is  obtained by 
 coalescing  $\frac{n-3}{3}$ copies of $K_4$ and one copy of $K_3$, or  
 obtained by coalescing  $\frac{n-6}{3}$ copies of $K_4$ and one copy of 
 $\theta(1,2,2,2,2)$.
\item [(2)] If $n \equiv 1 \pmod 3$ and $n\ne 1$, then $G$ is obtained  by 
coalescing $\frac{n-1}{3}$ copies of $K_4$; if $n= 1$, then $G\cong K_1$.
\item [(3)] If  $n\equiv 2 \pmod 3$, then $G$ is either obtained  by 
coalescing by $\frac{n-2}{3}$ 
copies of $K_4$ and a copy of $K_2$, or obtained  by 
coalescing  $\frac{n-5}{3}$ copies of $K_4$ and either one 
copy of $\theta(1,2,2,2)$, one copy of $\tilde{\theta}_1$, or  one copy of 
$\tilde{\theta}_2$.   
\end{itemize}
\end{theorem}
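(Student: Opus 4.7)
The plan is to mirror the proof of Theorem \ref{2ex} and to organize the argument around the new block menu supplied by Theorem \ref{cactus_stru} (iii). First I would establish tightness by direct calculation: for each coalescence listed in (1)--(3), Lemma \ref{edgesame} immediately yields the vertex count $n$ and exactly $2n-2$ or $2n-3$ edges (matching the relevant bound from Theorem \ref{4cactus}), so each listed graph is extremal. The converse direction is the real content; here I would fix an extremal 4-cactus $G$ with blocks $B_1,\dots,B_t$ and, as in the proof of Theorem \ref{2ex}, assume without loss of generality (via Lemma \ref{edgesame}) that all $B_i$ share a single common vertex $u$.

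Next I would prove a sequence of structural claims in the spirit of Claims 1--5 of Theorem \ref{2ex}. The analogue of Claim \ref{c1} says that every block is edge-maximum for its order; combined with Theorem \ref{cactus_stru} (iii), Proposition \ref{2c_k}, Proposition \ref{2c_small}, and Lemmas \ref{f0}--\ref{f1}, this restricts each $B_i$ to the short list $\{K_2,K_3,\theta(1,2,2),K_4,\theta(1,2,2,2),\tilde{\theta}_1,\tilde{\theta}_2,\theta(1,2,2,2,2)\}$, plus in principle blocks on $\ge 7$ vertices. To kill the latter, I would copy the argument of Claim \ref{c2}: if some $B_i$ has $|V(B_i)|\ge 7$, then $|E(B_i)|\le |V(B_i)|+3$ by Proposition \ref{2c_k}, and peeling $B_i$ off and applying the inductive upper bound of Theorem \ref{4cactus} to $G[V(G)\setminus V(B_i-u)]$ loses more than one edge below the target, contradicting extremality. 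The efficiency ratio $|E(B)|/(|V(B)|-1)$ is $2$ for $K_4$ and strictly smaller for every other admissible block, so the remaining claims amount to bounding the number of ``inefficient'' blocks.

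I would then prove, in the same excision-and-replacement style as Claims \ref{c3}--\ref{c4}: at most one $K_2$-block (merge two leaves otherwise), at most one $\theta(1,2,2)$-block, at most one block in $\{\theta(1,2,2,2),\tilde{\theta}_1,\tilde{\theta}_2\}$, at most one $\theta(1,2,2,2,2)$-block, and at most two $K_3$-blocks, together with compatibility conditions forbidding certain co-occurrences (e.g.\ one cannot simultaneously have a $K_2$-block and a $\theta(1,2,2)$-block, since their combined contribution of $1+3=4$ new vertices and $1+5=6$ edges is dominated by a $K_4$ contributing $3$ new vertices and $6$ edges plus something on $1$ vertex). Each such claim is proved by writing $B=B_{i_1}\cup\cdots\cup B_{i_m}$ and $G'=G[V(G)\setminus V(B-u)]$, then comparing $|E(G')|+|E(B)|$ to the target $2n-2$ or $2n-3$ via Theorem \ref{4cactus}.

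With these structural restrictions in hand, I would finish by case analysis on $n\bmod 3$, exactly as in Theorem \ref{2ex}. Writing $n=3\ell, 3\ell+1, 3\ell+2$, I compare $\sum_i(|V(B_i)|-1)=n-1$ with the fact that $K_4$-blocks contribute $3$ new vertices and $6$ edges. Solving the resulting arithmetic identifies the residue class, the number of $K_4$-blocks, and the necessary ``correction block(s)'' drawn from the admissible list, matching exactly the three conclusions (1)--(3). The main obstacle I anticipate is the richer 5- and 6-vertex block menu: in particular, in case (3) I must carefully confirm that all three of $\theta(1,2,2,2)$, $\tilde{\theta}_1$, and $\tilde{\theta}_2$ actually arise as extremal corrections (using Proposition \ref{2c_small} (iii)), and in case (1) I must separately justify that a single $\theta(1,2,2,2,2)$-block is a genuine alternative to the $K_3$ correction on the same residue class, rather than being ruled out by the replacement arguments used elsewhere.
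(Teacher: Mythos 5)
Your proposal follows exactly the route the paper intends: the authors omit the proof of Theorem \ref{4ex}, stating only that its technique is similar to Theorem \ref{2ex}, and your plan (tightness via Lemma \ref{edgesame}, edge-maximal blocks drawn from Theorem \ref{cactus_stru}(iii) together with Propositions \ref{2c_k} and \ref{2c_small}, excision arguments bounding block sizes and multiplicities, then arithmetic on $n \bmod 3$) is precisely that method. The only adjustments needed in execution are that the edge-maximality claim already excludes $\theta(1,2,2)$ as a block (any $4$-vertex block must be $K_4$), and that the correct multiplicity bound is at most \emph{one} $K_3$-block rather than two (each non-$K_4$ block loses one edge against the density-$2$ target, and the total slack available is only $1$), though the final residue-class arithmetic you describe would catch both points anyway.
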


\section{Concluding remarks}\label{discussion}
In this article, we have made a start on the topic of ``extremal” $k$-cactus 
graphs.  We give  tight upper bounds on the sizes of $k$-cactus graphs when $k$ 
takes small values.
Moreover, the corresponding 
extremal graphs are also characterized.

The next step would seem to be to look at $k$-cactus graphs for  larger values 
of  $k$ (see also Problem \ref{p1}). To be frank, for relatively smaller values 
of $k$, such as $k=5$ or $6$, it can be handled by applying the similar 
approach in this article.  Patience is needed,  but the level of complexity 
should not 
increase 
too much. However, for larger or especially general values of $k$, establishing 
(sharp) bounds is likely to be a difficult problem. It is worth noting that we 
first characterize 
$2$-, $3$-, $4$-cactus graphs, and then give upper bounds on their sizes. These 
characterizations heavily rely on the tool of the ear decomposition. However, 
as the value of $k$ in  $k$-cactus graphs increases, similar characterizations 
of their blocks
become an increasingly difficult thing, because the choices of adding ears   
will grow significantly (even when considering symmetry). As a result, we might 
need to find some cleverer ways to  resolve these problems without using the 
ear decomposition theorem.

Fortunately, we can still roughly catch some potential  strategies to tackle 
the extremal problem  on  the general $k$-cactus graphs. From our theorems, it 
appears that the primary constituents of extremal $k$-cactus graphs are 
(2-connected) extremal nice $k$-cactus graphs with minimum order. Therefore, 
for solving Problem \ref{p1}, we may need to 
solve two preliminary problems below.

\begin{problem}
What are the (tight) lower bounds on the orders of nice $k$-cactus graphs?
\end{problem}

\begin{problem}
What are  the (tight) upper bounds on the sizes of $2$-connected nice 
$k$-cactus graphs with $n$ vertices when $n\le k+1$?
\end{problem}

\section{Acknowledgment}
We claim that there is no conflict of interest in our paper.
No data was used for the research described in the article.

%\bibliographystyle{bib}
%\bibliography{pap.bib}
\end{document}